\newcommand{\CM}{Cohen-Macaulay}
\newcommand{\wrt}{with respect to}
\newcommand{\n}{\mathfrak{n} }
\newcommand{\m}{\mathfrak{m} }
\newcommand{\q}{\mathfrak{q} }
\newcommand{\ZZ}{\mathbb{Z} }
\newcommand{\FF}{\mathbb{F}}
\newcommand{\GG}{\mathbb{G}}
\newcommand{\HH}{\mathbb{H}}
\newcommand{\bx}{\mathbf{x} }
\newcommand{\rt}{\rightarrow}
\newcommand{\ov}{\overline}
\newcommand{\Om}{\Omega }
\newcommand{\image}{\operatorname{image}}
\newcommand{\coker}{\operatorname{coker}}
\newcommand{\Ass}{\operatorname{Ass}}
\newcommand{\depth}{\operatorname{depth}}
\newcommand{\Tr}{\operatorname{Tr}}
\newcommand{\ann}{\operatorname{ann}}
\newcommand{\codim}{\operatorname{codim}}
\newcommand{\projdim}{\operatorname{projdim}}
\newcommand{\Hom}{\operatorname{Hom}}
\newcommand{\Ext}{\operatorname{Ext}}
\newcommand{\Tor}{\operatorname{Tor}}
\newcommand{\CMS}{\operatorname{\underline{CM}}}
\newcommand{\ICMS}{\operatorname{\underline{ICM}}}
\newcommand{\CMa}{\operatorname{CM}}
\newcommand{\CMr}{\operatorname{CM^r}}
\newcommand{\ICMa}{\operatorname{ICM}}
\theoremstyle{plain}
\newtheorem{theorem}{Theorem}[section]
\newtheorem{corollary}[theorem]{Corollary}
\newtheorem{lemma}[theorem]{Lemma}
\newtheorem{proposition}[theorem]{Proposition}
\theoremstyle{definition}
\newtheorem{definition}[theorem]{Definition}
\newtheorem{remark}[theorem]{Remark}
\theoremstyle{remark}
\begin{document}

\title[  stable category ]{A function on the the set of isomorphism classes in the stable category of maximal Cohen-Macaulay modules over a Gorenstein ring: with applications to Liason theory }
 \author{Tony J. Puthenpurakal}
\date{\today}
\address{Department of Mathematics, Indian Institute of Technology Bombay, Powai, Mumbai 400 076, India}
\email{tputhen@math.iitb.ac.in}

\subjclass{Primary 13C40; Secondary 13C14,13D40 }
\keywords{stable category, Liason theory, maximal Cohen-macaulay approximations}

\begin{abstract}
Let $(A,\m)$ be a Gorenstein local ring of dimension $d \geq 1$. Let $\CMS(A)$ be the stable category of maximal \CM \ $A$-modules and let $\ICMS(A)$ denote the set of isomorphism classes  in $\CMS(A)$. We define a function $\xi \colon \ICMS(A) \rt \ZZ$ which behaves well with respect to exact triangles in $\CMS(A)$. We then apply this to (Gorenstein) liason theory. We prove that if $\dim A \geq 2$ and $A$ is not regular then the even liason classes of $\m^n; n\geq 1$  is an infinite set. We also prove that if $A$ is an complete equi-characteristic  simple singularity  with $A/\m$ uncountable then for each $m \geq 1$ the set $\mathcal{C}_m = \{ I \mid  I \ \text{is a codim $2$ CM-ideal with} \ e_0(A/I) \leq m \}$ is contained in finitely many even liason classes $L_1,\ldots,L_r$ (here $r$ may depend on $m$).

\end{abstract}

\maketitle
\section{Introduction}
Let $(A,\m)$ be a Gorenstein local ring of dimension $d \geq 1$ with residue field $k$. Let $\CMa(A)$ denote the full subcategory of maximal \CM \ $A$-modules and let $\CMS(A)$ denote the stable category of maximal \CM \ $A$-modules. Recall that objects in $\CMS(A)$ are same as objects in $\CMa(A)$. However the set of morphisms $\underline{\Hom_A}(M,N)$ between $M$ and $N$ is  $= \Hom_A(M,N)/P(M,N)$ where $P(M,N)$ is the set of $A$-linear maps from $M$ to $N$ which factor through a finitely generated free module. It is well-known that $\CMS(A)$ is  a triangulated category with translation functor $\Omega^{-1}$. Here $\Omega(M)$ denotes the syzygy of $M$ and $\Omega^{-1}(M)$ denotes the co-syzygy of $M$. Also recall that an object $M$ is zero in $\CMS(A)$ if and only if it is free  considered as an $A$-module. Furthermore $M \cong N$ in $\CMS(A)$ if and only
if there exists finitely generated free modules $F,G$ with $M\oplus F \cong N \oplus G$ as $A$-modules. 
Let $\ICMS(A)$ denote the set of isomorphism classes  in $\CMS(A)$ and for an object $M \in \CMS(A)$ we denote its isomorphism class by $[M]$.

We say a function $\xi \colon \ICMS(A) \rt \ZZ$ is a \emph{triangle} 
 function if it satisfies the following properties:
\begin{enumerate}
\item
$\xi([M]) \geq 0$ for all $M \in \CMS(A)$.
\item
$\xi([M]) = 0$ if and only if $M = 0$ in $\CMS(A)$ .
\item
$\xi([M_1 \oplus M_2]) = \xi([M_1]) + \xi([M_2])$ for all $M_1, M_2 \in \CMS(A)$.
\item 
\emph{
(sub-additivity)} If $M \rt N \rt L \rt \Om^{-1}(M)$ is an exact triangle in $\CMS(A)$ then
\begin{enumerate}
\item
$\xi([N]) \leq \xi([M]) + \xi([L])$.
\item
$\xi([L]) \leq \xi([N]) + \xi([\Om^{-1}(M)])$.
\item
$\xi([\Om^{-1}(M)]) \leq \xi([L]) + \xi([\Om^{-1}(N)])$.
\end{enumerate}
\end{enumerate}
\begin{remark}\label{rotation}
\begin{enumerate}[\rm (i)]
\item
Since rotations of exact triangles are exact it follows that if $\xi$ satisfies (4)(b) for all exact triangles then it will also satisfy 4(a),(c).
\item
Axiom (3) implies that $\xi([M]) = 0$ if $M = 0$. However note that axiom (2) also implies that if $\xi([M]) = 0$ then $M = 0$.
\end{enumerate}
\end{remark}
We have the following result on existence of triangle functions. Let $\ell(N)$ denote the length of an $A$-module $N$.
\begin{theorem}\label{existence}
Let $(A,\m)$ be a Gorenstein local ring of dimension $d \geq 1$. Then the function
$$ e^T_A([M]) = \lim_{n \rt \infty} \frac{(d-1)!}{n^{d-1}}\ell\left(\Tor^A_1(M, \frac{A}{\m^{n+1}}) \right )  \quad \text{where} \  [M] \in \ICMS(A)$$
is a triangle function on $\ICMS(A)$.
\end{theorem}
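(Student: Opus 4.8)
The plan is to verify axioms (1)--(4) for $\xi=e^T_A$, the only real difficulty being the nontrivial half of (2). Throughout I write $\tau_M(n)=\ell\bigl(\Tor^A_1(M,A/\m^{n+1})\bigr)$ and let $\mu(M)$ be the minimal number of generators of $M$. First I would record a formula for $\tau_M$. Fixing a minimal presentation $0\rt \Om M\rt F_0\rt M\rt 0$ with $\rank F_0=\mu(M)$, the depth lemma shows $\Om M$ is again maximal \CM, and tensoring with $A/\m^{n+1}$ (using $\Tor^A_1(F_0,-)=0$) gives an exact sequence
\[
0\rt \Tor^A_1(M,A/\m^{n+1})\rt \frac{\Om M}{\m^{n+1}\Om M}\rt \frac{F_0}{\m^{n+1}F_0}\rt \frac{M}{\m^{n+1}M}\rt 0 .
\]
Hence for $n\gg 0$ one has $\tau_M(n)=P_{\Om M}(n)+P_M(n)-\mu(M)P_A(n)$, where $P_N$ denotes the Hilbert--Samuel polynomial of $N$ with respect to $\m$. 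All three polynomials have degree $d$ with leading coefficient $e(\m;-)/d!$, and additivity of multiplicity on the displayed sequence gives $e(\m;\Om M)+e(\m;M)=\mu(M)e(\m;A)$; so $\deg\tau_M\le d-1$. Consequently $e^T_A([M])=(d-1)!\cdot(\text{coefficient of }n^{d-1}\text{ in }\tau_M)$ is a well-defined non-negative integer (a numerical polynomial of degree $\le d-1$ with non-negative values), and it depends only on $M$ modulo free summands because $\Tor^A_1(F,-)=0$ for free $F$. This settles (1), (3), well-definedness on $\ICMS(A)$, and the implication $M=0\Rightarrow e^T_A([M])=0$ in (2).

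Next I would treat sub-additivity. The crucial remark is that for \emph{any} short exact sequence $0\rt X\rt Y\rt Z\rt 0$ of maximal \CM \ modules, the exact strand $\Tor^A_1(X,A/\m^{n+1})\rt\Tor^A_1(Y,A/\m^{n+1})\rt\Tor^A_1(Z,A/\m^{n+1})$ realizes the middle term as an extension of a submodule of the right term by a quotient of the left term, so $\tau_Y(n)\le\tau_X(n)+\tau_Z(n)$ for all $n$; dividing by $n^{d-1}$ and letting $n\rt\infty$ yields $e^T_A([Y])\le e^T_A([X])+e^T_A([Z])$. Now, by construction of the triangulated structure, every exact triangle in $\CMS(A)$ is isomorphic to a standard triangle $X\xar Y\xar C_f\xar \Om^{-1}X$ built, by pushout along $f\colon X\rt Y$, from an exact sequence $0\rt X\rt I\rt \Om^{-1}X\rt 0$ with $I$ free; in particular $0\rt Y\rt C_f\rt \Om^{-1}X\rt 0$ is a short exact sequence of maximal \CM \ modules. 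Applying the crucial remark to it gives precisely $e^T_A([C_f])\le e^T_A([Y])+e^T_A([\Om^{-1}X])$, i.e.\ property 4(b); by Remark \ref{rotation}(i), properties 4(a) and 4(c) follow.

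Finally comes the main point: proving $e^T_A([M])=0\Rightarrow M$ is free. I would argue by induction on $d$. If $d=1$, then $\tau_M$ is eventually constant, equal to $e^T_A([M])$, so the hypothesis forces $\Tor^A_1(M,A/\m^{n+1})=0$ for some $n$, i.e.\ $\Om M\cap\m^{n+1}F_0=\m^{n+1}\Om M$; since $\Om M\subseteq\m F_0$ by minimality, $\m^n\Om M\subseteq\Om M\cap\m^{n+1}F_0=\m^{n+1}\Om M$, so $\m^n\Om M=0$ by Nakayama, and as $\Om M$ is maximal \CM \ over a one-dimensional ring this forces $\Om M=0$, i.e.\ $M$ free. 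If $d\ge 2$, after harmlessly replacing $A$ by $A[t]_{\m A[t]}$ I may assume $k$ infinite and choose $x\in\m\setminus\m^2$ that is superficial for $\m$ with respect to $A$, $M$ and $\Om M$ and avoids their associated primes; then $x$ is a nonzerodivisor on all three, $\ov A:=A/xA$ is Gorenstein of dimension $d-1$, $\ov M:=M/xM$ is maximal \CM, reducing the minimal resolution of $M$ modulo $x$ gives $\Om^{\ov A}\ov M\cong\Om M/x\Om M$, and the change-of-rings isomorphism $\Tor^{\ov A}_i(\ov M,k)\cong\Tor^A_i(M,k)$ (valid since $x$ is a nonzerodivisor on $A$ and on $M$) shows $\ov M$ is free over $\ov A$ iff $M$ is free over $A$. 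Using the standard fact $P_{N/xN}(n)=P_N(n)-P_N(n-1)$ for a maximal \CM \ module $N$ and a superficial nonzerodivisor $x$, the formula for $\tau$ gives $\tau_{\ov M}(n)=\tau_M(n)-\tau_M(n-1)$ for $n\gg 0$, hence $e^T_{\ov A}([\ov M])=e^T_A([M])=0$; by induction $\ov M$, and therefore $M$, is free.

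I expect this last reduction to be the delicate step. One can in fact compute $e^T_A([M])=\mu(M)e_1(\m;A)-e_1(\m;M)-e_1(\m;\Om M)$, but the strict positivity of this expression for non-free $M$ is not transparent; it is only after passing to the one-dimensional situation, where the Nakayama argument applies directly, that the obstruction disappears. The remaining ingredients (the $\tau_M$ formula, additivity of multiplicity, preservation of Hilbert coefficients under superficial elements, and the standard description of triangles in a stable Frobenius category) are routine.
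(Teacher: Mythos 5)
Your proof is correct, and its overall skeleton coincides with the paper's: establish the four axioms at the level of $\CMa(A)$ and then descend to $\ICMS(A)$ by noting that every exact triangle is isomorphic to a standard pushout triangle $X \rt Y \rt C_f \rt \Om^{-1}(X)$ whose tail $0 \rt Y \rt C_f \rt \Om^{-1}(X) \rt 0$ is a short exact sequence of maximal \CM\ modules, after which rotation gives 4(a),(c). Where you genuinely diverge is in the two substantive steps. For sub-additivity the paper runs the horseshoe lemma over $0 \rt M_1 \rt M_2 \rt M_3 \rt 0$, expresses each $e^T_A(M_i)$ as $e_1(F_i)-e_1(M_i)-e_1(N_i)$ for an arbitrary free cover (Lemma \ref{arbit-pres}), and invokes super-additivity of $e_1$ on short exact sequences of maximal \CM\ modules; you instead read off the termwise inequality $\tau_Y(n)\leq \tau_X(n)+\tau_Z(n)$ directly from the long exact sequence of $\Tor$, which is more elementary, gives the inequality for every $n$ rather than only asymptotically, and bypasses the Hilbert-coefficient machinery entirely. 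For axiom (2) the paper simply cites \cite[Theorem 18]{Pu1}; your induction on $d$ — the Nakayama argument $\m^n\Om M\subseteq \Om M\cap\m^{n+1}F_0=\m^{n+1}\Om M$ in dimension one, followed by reduction modulo a superficial nonzerodivisor chosen simultaneously for $A$, $M$ and $\Om M$ (which is exactly the mechanism of the paper's Proposition \ref{mod-sup}) — is a self-contained reproof of that cited result, and it correctly isolates the one place where strict positivity is not transparent from the closed formula $\mu(M)e_1(A)-e_1(M)-e_1(\Om M)$. All the auxiliary facts you invoke (additivity of $e_0$, $P_{N/xN}(n)=P_N(n)-P_N(n-1)$ for a superficial nonzerodivisor, preservation of minimal resolutions and of freeness under reduction modulo $x$, and the harmless base change to $A[t]_{\m A[t]}$) are standard and correctly applied.
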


Unlike the multiplicity function which can be defined uniquely through a set of axioms, triangle functions are highly non-unique. In \ref{inf-tri} we will construct infinitely many triangle functions. However $e^T_A$ is the simplest triangle function that we have constructed. It also behaves well with generic hyperplane sections, see Proposition \ref{mod-sup} for details.

\subsection{Applications to Liason theory } \  \\
The existence of triangle functions has non-trivial implications in Liason theory. In  fact in Application I and II we prove our results by using any triangle function. However for application III we need some additional properties of $e^T_A$. 
 
Let $(A,\m)$ be a Gorenstein local ring.
We say an ideal $\q$ is a Gorenstein ideal if it is perfect  and $A/\q$ is a Gorenstein ring.
We should remark that some authors \textit{do not} require in the definition of Gorenstein ideals for $\q$ to be perfect. \textit{However we will require it to be so.}

We begin by recalling the definition of (Gorenstein)  linkage. 
\begin{definition}
Ideals $I$ and $J$ of $A$ are (algebraically) linked by a Gorenstein ideal $\q$ if
\begin{enumerate}[\rm (a)]
\item
$\q \subseteq I\cap J$, and
\item
$I = (\q \colon J)$ and $J = (\q \colon I)$.
\end{enumerate}
We write it as $I \sim_\q J$.
\end{definition}
If $\q$ is a complete intersection ideal then we say that $I$ is CI-linked to $J$.
We say ideals $I$ and $J$ is in the \emph{same linkage class} if there is a sequence of ideals $I_0,\ldots, I_n$ in $A$ and Gorenstein ideals $\q_0 \ldots,\q_{n-1}$ such that
\begin{enumerate}[\rm (i)]
\item
$I_j \sim_{\q_j} I_{j+1}$, for $j = 0,\ldots, n-1$.
\item
 $I_0 = I$ and $I_n =J$. 
\end{enumerate}
If $n$ is even then we say that $I$ and $J$ are \emph{evenly linked}.
We can analogously define CI-linkage class and even CI-linkage class.

The notion of linkage has been extended to modules, \cite{MS}. See section $4$ for definition. Note that ideals $I$ and $J$ are linked as ideals if and only if the cyclic modules
$A/I$ and $A/J$ are linked as modules; see \cite[Proposition 1]{MS}

\noindent\emph{Application- I:}  Let $K$ be a field  let $R = K[[X_1,\ldots,X_n]]$. Set  $\n = (X_1,\ldots, X_n)$.  By results in \cite[Theorem 3.6]{KM} it can be shown that $\n^i$ is evenly linked to $\n^{i-1}$ for all $i \geq 2$.  Note that if $n \geq 3$ then this result \textit{does not} hold for CI-liason \cite[Theorem 1.1]{W}.
If $(A,\m)$ is a one dimensional Gorenstein local ring  then one can prove that there exists $s \geq 1$ such that $\m^{sn + r}$ is evenly linked to $\m^{s(n-1) + r}$ for all $n \gg 0$ and $r = 0,1,\ldots,s-1$; see Proposition \ref{1-dim}.
A natural question is when is the set of ideals $\{ \m^n \mid n \geq 1 \} $ contained in finitely many even liason classes. 
Our first result implies that the above two cases 
are essentially the only cases when the above condition holds. We prove the following more general result:
\begin{theorem}\label{result-1}
Let $(A,\m)$ be a Gorenstein local ring. Let $M$ be a finitely generated $A$-module of dimension $r \geq 2$.
Let $\Lambda_M = \{ M/\m^n M  \mid n \geq 1 \}$. If there exists finitely many even liason classes of modules $L_1, L_2 ,\ldots, L_m$ such that
\[
\Lambda_M \subseteq \bigcup_{i = 1}^{m} L_i
\]
then $A$ is regular.
\end{theorem}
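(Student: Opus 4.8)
The plan is to argue by contradiction: assume $\Lambda_M \subseteq L_1 \cup \cdots \cup L_m$ with the $L_i$ even liason classes of modules, and deduce that $A$ is regular. Fix any triangle function $\xi$ on $\ICMS(A)$; one exists by Theorem~\ref{existence}. Throughout I use the identification (Buchweitz) of $\CMS(A)$ with the singularity category $D_{sg}(A)=D^{b}(A)/\mathrm{Perf}(A)$, and for a finite length $A$-module $N$ I write $\overline{N}$ for its image in $\CMS(A)$ (represented by the maximal \CM\ module $\Om^{d}N$, up to free summands). The assignment $N\mapsto\overline{N}$ is additive, sends a short exact sequence of finite length modules to an exact triangle in $\CMS(A)$, and satisfies $\overline{N}=0$ precisely when $\projdim_A N<\infty$. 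Note $M/\m^{n}M$ has finite length for each $n$, its support being $\{\m\}$.

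\textbf{Step 1 (even linkage preserves $[\overline{(\ \cdot\ )}]$).} I would first show: if finite length modules $N$ and $N'$ are linked by a Gorenstein ideal $\q$, then $\overline{N'}\cong\Phi(\overline{N})$ in $\CMS(A)$, where $\Phi=\Om^{a}\circ\mathbb{D}$ for a fixed integer $a$ and $\mathbb{D}$ is the $A$-duality functor on $\CMS(A)$ (induced by $\Hom_A(-,A)$). The decisive point is that here $\q$ is $\m$-primary and \emph{perfect}: hence $A/\q$ is Artinian Gorenstein, so self-injective, and has finite projective dimension over $A$, so $\overline{A/\q}=0$ in $\CMS(A)$. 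Dualizing a minimal $A/\q$-free cover $0\rt\Om_{A/\q}N\rt G\rt N\rt 0$ over the self-injective ring $A/\q$ produces $0\rt N^{\vee}\rt G^{\vee}\rt N'\rt 0$ with $N^{\vee}=\Hom_{A/\q}(N,A/\q)\cong\Ext^{d}_A(N,A)$ (local/Matlis duality) and $G^{\vee}$ free over $A/\q$, hence $\overline{G^{\vee}}=0$; the resulting triangle gives $\overline{N'}\cong\Om^{-1}(\overline{N^{\vee}})\cong\Phi(\overline{N})$. As $\mathbb{D}^{2}=\mathrm{id}$ and $\mathbb{D}\circ\Om=\Om^{-1}\circ\mathbb{D}$, one computes $\Phi^{2}=\mathrm{id}$; so two successive links restore the isomorphism class of $\overline{(\ \cdot\ )}$, whence $N\mapsto[\overline{N}]$ is constant on each even liason class. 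In particular $\{\,[\overline{M/\m^{n}M}]:n\ge 1\,\}$ lies in the finite set $\{[\overline{L_1}],\ldots,[\overline{L_m}]\}\subseteq\ICMS(A)$.

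\textbf{Step 2 (the contradiction).} For each $n$, the exact sequence $0\rt\m^{n}M/\m^{n+1}M\rt M/\m^{n+1}M\rt M/\m^{n}M\rt 0$ has left term a $k$-vector space of dimension $t_n:=\dim_k(\m^{n}M/\m^{n+1}M)$, so it yields an exact triangle
\[
\overline{k}^{\,\oplus t_n}\rt\overline{M/\m^{n+1}M}\rt\overline{M/\m^{n}M}\rt\Om^{-1}\!\big(\overline{k}^{\,\oplus t_n}\big)
\]
in $\CMS(A)$. Sub-additivity (4)(c) together with additivity (3) gives
\[
t_n\cdot\xi\big(\Om^{-1}(\overline{k})\big)\;=\;\xi\big((\Om^{-1}\overline{k})^{\oplus t_n}\big)\;\le\;\xi\big(\overline{M/\m^{n}M}\big)+\xi\big(\Om^{-1}\overline{M/\m^{n+1}M}\big).
\]
By Step 1 the right-hand side remains in the finite set $\{\xi(\overline{L_i})\}+\{\xi(\Om^{-1}\overline{L_i})\}$, hence is bounded above independently of $n$. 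Since $\dim M=r\ge 2$, the function $n\mapsto t_n$ agrees for $n\gg 0$ with a polynomial of degree $r-1\ge 1$, so $t_n\to\infty$; therefore $\xi(\Om^{-1}(\overline{k}))=0$. By axiom (2) this forces $\Om^{-1}(\overline{k})=0$, i.e.\ $\overline{k}=0$ in $\CMS(A)$, i.e.\ $\projdim_A k<\infty$, i.e.\ $A$ is regular.

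The main obstacle I anticipate is Step~1: reconciling the module-linkage of Section~4 (phrased through $\Om\Tr$, following Martsinkovsky--Strooker) with the explicit short exact sequence used above, and verifying that it is exactly the perfectness requirement on Gorenstein ideals that makes $\overline{A/\q}$ vanish in $\CMS(A)$, so that a single link is implemented by the genuine self-inverse functor $\Phi$ rather than by $\Phi$ modified by a $\q$-dependent error term. Given Step~1, the rest is a short diagram chase, and the hypothesis $r\ge 2$ enters only through $t_n\to\infty$.
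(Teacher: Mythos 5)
Your proof is correct and follows essentially the same route as the paper's: the telescoping exact sequences $0\rt \m^nM/\m^{n+1}M \rt M/\m^{n+1}M\rt M/\m^nM\rt 0$, the induced exact triangles in the stable category, sub-additivity 4(c) of an arbitrary triangle function, and the unboundedness of the Hilbert function of a module of dimension $\geq 2$ forcing $\xi$ of (a cosyzygy of) the residue field to vanish. The only real difference is that your Step 1 re-derives, for finite-length modules and in the language of the singularity category (working with $\Om^d N$ rather than the MCM approximation $X_N$, which differ only by a fixed suspension), the Martsinkovsky--Strooker/Yoshino--Isogawa theorem that evenly linked modules have isomorphic stable maximal Cohen--Macaulay approximations, whereas the paper simply cites that result in Section 4.
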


\noindent\emph{Application-II:}  Assume $(A,\m)$ is a complete equi-characteristic Gorenstein local ring. Let $I$ be an ideal in $A$ generated by a regular sequence. Using results in \cite[Theorem 3.6]{KM} it can be proved that $I^n$ is evenly linked to $I^{n-1}$ for all $n \geq 2$, see Proposition \ref{reg-equi}. Thus the modules
$A/I^n$ is evenly linked to $A/I^{n-1}$ for all $n \geq 2$. It follows that if $F$ is a finitely generated free $A$-module then $F/I^nF$ is evenly linked to $F/I^{n-1}F$ for all $n \geq 2$.
A natural question is whether $M/I^nM$ is  evenly linked to $M/I^{n-1}M$
for all $n \gg 0$ when $M \in \CMa(A)$.
We prove the following surprising result:
\begin{theorem}\label{result-2}
Let $(A,\m)$ be a Gorenstein local ring of dimension $d \geq 2$. Let $M \in \CMa(A)$. Let $x_1,\ldots,x_r$ be an $A$-regular sequence with $r \geq 2$. Let $I = (x_1,\ldots,x_r)$  and let
$\Lambda_M^I = \{ M/I^nM \mid n \geq 1 \}$. If there exists finitely many even liason classes of modules $L_1, L_2 ,\ldots, L_m$ such that
\[
\Lambda_M^I \subseteq \bigcup_{i = 1}^{m} L_i
\]
then $M$ is free.
\end{theorem}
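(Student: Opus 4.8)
The plan is to mimic the strategy behind Theorem \ref{result-1}, replacing the role of the powers of the maximal ideal by the powers of the complete intersection ideal $I=(x_1,\ldots,x_r)$, and to extract a contradiction from any triangle function $\xi$ (say $\xi = e^T_A$) by showing that $\xi([\Omega(M/I^nM)])$ grows without bound while evenly linked modules must have $\xi$-values drawn from a bounded set. First I would recall (or establish in a preliminary lemma) the module-linkage analogue of the ideal-theoretic fact: if two modules $N_1, N_2$ are linked by a Gorenstein ideal, then in $\CMS(A)$ their appropriate MCM reductions are related by $\Omega^{\pm 1}$ together with a free summand; consequently even linkage translates into an equality $[\text{MCM part of } N_1] = [\Omega^{2k}(\text{MCM part of } N_2)] = [\text{MCM part of } N_2]$ in $\CMS(A)$, because $\Omega^2$ is the identity on $\CMS$ up to the shift — more precisely $\Omega$ is an autoequivalence and the class in $\ICMS(A)$ is what matters. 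The key point is that evenly linked modules give the \emph{same} element of $\ICMS(A)$ up to finitely many possibilities (one for each of the $m$ liason classes, since within a class the stable isomorphism type is pinned down by the linkage data), hence $\xi$ takes only finitely many values on $\Lambda_M^I$.

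Next I would compute, or rather bound below, $\xi$ of the stable class attached to $M/I^nM$. Since $M \in \CMa(A)$ and $x_1,\ldots,x_r$ is an $A$-regular sequence, it is also $M$-regular, so $M/I^nM$ has a well-understood structure: there is a filtration whose quotients are $I^j M/I^{j+1}M \cong (M \otimes_A \operatorname{Sym}_j(A^r/\!\ker))$-type modules, and $\projdim_A M/I^nM = r$ for every $n$ (by Auslander–Buchsbaum, since $\depth M/I^nM = d - r$). Taking the MCM approximation or simply the $r$-th syzygy $\Omega^r(M/I^nM)$ lands in $\CMa(A)$, and I would estimate $e^T_A$ of this module via Theorem \ref{existence}, i.e. via the normalized limit of $\ell(\Tor_1^A(\Omega^r(M/I^nM), A/\m^{n+1}))$, equivalently via $\ell(\Tor_{r+1}^A(M/I^nM, A/\m^{\bullet+1}))$. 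The Koszul-type resolution of $A/I^n$ tensored with $M$ shows this Tor grows with $n$: heuristically $M/I^nM$ "contains" $n$ nested copies of complete-intersection quotients, forcing the relevant Betti/Tor numbers — and hence $e^T_A$ — to tend to infinity, unless $M$ is free (in which case $M/I^nM$ is just $F/I^nF$, which by Proposition \ref{reg-equi} stays in a single even liason class and has bounded $\xi$).

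The main obstacle, I expect, is making precise the claim that $e^T_A([\Omega^r(M/I^nM)]) \to \infty$ when $M$ is not free. One has to rule out cancellation: a priori the alternating behaviour of Tor could keep the relevant limiting length bounded. I would handle this by working with a fixed triangle function and exploiting sub-additivity along the short exact sequences $0 \to I^{n}M/I^{n+1}M \to M/I^{n+1}M \to M/I^nM \to 0$: these give exact triangles in $\CMS(A)$ after taking syzygies, and sub-additivity (axiom 4) yields $\xi([\Omega(M/I^{n+1}M)]) \le \xi([\Omega(M/I^nM)]) + \xi([\Omega(I^nM/I^{n+1}M)])$ and a reverse-type inequality, so that boundedness of $\xi$ on $\Lambda_M^I$ forces $\xi([\Omega(I^nM/I^{n+1}M)]) \to 0$, hence (by axiom 2) $I^nM/I^{n+1}M$ becomes free for $n \gg 0$. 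But $I^nM/I^{n+1}M \cong (M/IM) \otimes_{A/I} (I^n/I^{n+1})$ and $I^n/I^{n+1}$ is free over $A/I$ of rank $\binom{n+r-1}{r-1}$, so $I^nM/I^{n+1}M$ is a direct sum of copies of $M/IM$; this is $A$-free iff $M/IM$ is $A/I$-free iff (since $x_1,\ldots,x_r$ is $M$-regular) $M$ is $A$-free. This last chain is the delicate combinatorial/homological core, and it is where I would spend the most care; the rest is bookkeeping with the axioms of a triangle function and the module-linkage dictionary from \cite{MS}.
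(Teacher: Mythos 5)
Your proposal is essentially the paper's own argument: the same filtration $0 \to I^nM/I^{n+1}M \to M/I^{n+1}M \to M/I^nM \to 0$, the same isomorphism $I^nM/I^{n+1}M \cong (M/IM)^{\binom{n+r-1}{r-1}}$, the same use of sub-additivity of a triangle function on the resulting exact triangles of maximal Cohen--Macaulay approximations (this is Proposition \ref{exact-n}), and the same appeal to the Martsinkovsky--Strooker invariance of the MCM approximation under even linkage to bound the relevant terms, forcing $\xi([\Om^{-1}X_{M/IM}])=0$ because $\binom{n+r-1}{r-1}\to\infty$. The only repair needed is in your final chain: a codimension-$r$ module such as $I^nM/I^{n+1}M$ is never $A$-free, and what axiom (2) actually yields is that the MCM approximation $X_{M/IM}$ is free, i.e.\ $\projdim_A M/IM < \infty$, hence $\projdim_A M < \infty$ and $M$ is free; the intermediate remarks about $\Omega^2$ being the identity on $\CMS(A)$ (false in general) and about estimating Tor-growth are unnecessary detours that your own sub-additivity argument supersedes.
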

Note that in the above result we \emph{do not} assume that $A$ is complete or contains a field. We do not know whether the result holds if $r =1$.

\noindent\emph{Application-III:}  Let $I$ be a perfect ideal of codimension 2. It is well-known that $I$ is licci (i.e., it is CI-linked to a complete intersection). However an arbitrary  codimension two Cohen-Macaulay ideal need not be licci. For instance if $(A,\m)$ is non-regular Gorenstein ring of dimension 2 then $\m$ is not a licci-ideal (this is so because if $I$ is licci then $\projdim A/I $ is finite.) So a natural question is whether codimension two Cohen-Macaulay ideals are contained in finitely many even liason classes. Again this is not possible. Let $(A,\m)$ be a  non-regular Gorenstein ring of dimension 2. Then by Theorem \ref{result-1}  the set of ideals $\{ \m^n \mid n\geq 1 \}$ is not contained in finitely many even liason classes of ideals in $A$. Note that $\ell(A/\m^n) \rightarrow \infty$ as $n \rightarrow \infty$. So we reformulate the question. Let $\mathcal{C}_m = \{ I \mid  I \ \text{is a codim $2$ CM-ideal with} \ e_0(A/I) \leq m \}$. Here $e_0(A/I)$ is the multiplicity of the ring $A/I$ \wrt \ its maximal ideal. Our question is whether $\mathcal{C}_m$ contained in finitely many even liason classes of ideals. Regular rings trivially have this property.
We prove 
\begin{theorem}\label{result-3}
Let $k$ be an uncountable algebraically closed field of characteristic different from $2$. Let $(P,\n)$ be a regular analytic $k$-algebra, i.e., a formal or convergent (if $k$ is a complete non-trivial valuated field) power series ring $k<x_0,x_1,\ldots,x_d>$ with $d \geq 2$.  Let $f \in \n$ be such that $A = P/(f)$ is a simple hypersurface singularity. For $m \geq 1$ let $\mathcal{C}_m = \{ I \mid  I \ \text{is a codim $2$ CM-ideal with} \ e_0(A/I) \leq m \}$. Then for every $m \geq 1$ there exists finitely many even liason classes $L_1,\ldots,L_r$ (depending on $m$) such that
\[
\mathcal{C}_m \subseteq \bigcup_{i = 1}^{r} L_i
\]
\end{theorem}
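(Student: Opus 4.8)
The plan is to attach to each ideal $I \in \mathcal{C}_m$ the maximal \CM \ module $N_I := \Om^2_A(A/I)$, to bound the triangle function $e^T_A$ on $[N_I]$ in terms of $m$ alone, and then to exploit that a simple hypersurface singularity has finite \CM -representation type, so that a uniform bound on $e^T_A$ forces $[N_I]$ into a finite subset of $\ICMS(A)$ as $I$ runs over $\mathcal{C}_m$. The final passage from this to finiteness of the number of even liason classes will be the module-linkage input of Section $4$, and is where I expect the main difficulty to lie.

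\emph{Setting up the module.} Since $A$ is Gorenstein and $A/I$ is \CM \ of dimension $d-2$, a depth count along the first two steps of a free resolution gives $\depth N_I = d$, so $N_I \in \CMa(A)$ (and $N_I$ is free precisely when $A/I$ has finite projective dimension). Let $X_1,\dots,X_t$ represent the finitely many non-free indecomposable objects of $\CMa(A)$; their finiteness is exactly the assertion that the simple hypersurface singularity $A$ has finite \CM -type, and it is through the classification of such singularities that the hypotheses on $k$ (uncountable, algebraically closed, of characteristic $\ne 2$) are used. By Krull--Schmidt, $[N_I] = \bigoplus_j X_j^{\,a_j(I)}$ in $\CMS(A)$ for non-negative integers $a_j(I)$, and by axioms (2) and (3) of a triangle function, $c_0 := \min_j e^T_A([X_j])$ is strictly positive while $e^T_A([N_I]) = \sum_j a_j(I)\, e^T_A([X_j]) \ge c_0 \sum_j a_j(I)$. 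Hence a bound $e^T_A([N_I]) \le C(m)$ depending only on $m$ and $A$ would confine the tuple $(a_j(I))_j$ to a finite set, and so confine $[N_I]$ to a finite subset $\mathcal{S}_m \subseteq \ICMS(A)$.

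\emph{Bounding $e^T_A([N_I])$.} Here I would use the compatibility of $e^T$ with generic hyperplane sections, Proposition \ref{mod-sup}. As $k$ is infinite, choose $y_1,\dots,y_{d-2}\in\m$ whose images in $A/I$ form a minimal reduction of a system of parameters of the \CM \ ring $A/I$ and which form an $A$-regular sequence; set $B = A/(y_1,\dots,y_{d-2})$, a two-dimensional hypersurface, and $\ov{I} = IB$. Since $y_1,\dots,y_{d-2}$ is a regular sequence on both $A$ and $A/I$, it is $\Tor$-acyclic on all syzygies of $A/I$, so tensoring the minimal $A$-free resolution of $A/I$ with $B$ stays minimal and exact; thus $N_I \otimes_A B \cong \Om^2_B(B/\ov{I})$, and iterating Proposition \ref{mod-sup} gives $e^T_A([N_I]) = e^T_B([\Om^2_B(B/\ov{I})])$. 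Now $\ell(B/\ov{I}) = e_0(A/I) \le m$ (the images of the $y_i$ form a minimal reduction of the maximal ideal of the \CM \ ring $A/I$), and an elementary argument --- the chain $B\supseteq \mathfrak{m}_B+\ov{I}\supseteq\mathfrak{m}_B^2+\ov{I}\supseteq\cdots$ stabilizes in at most $m$ steps --- gives $\mathfrak{m}_B^{\,m}\subseteq\ov{I}$. Since $B/\mathfrak{m}_B^{\,m}$ has finite length there are only finitely many such ideals $\ov{I}$, hence only finitely many modules $\Om^2_B(B/\ov{I})$ up to isomorphism, hence only finitely many values $e^T_B([\Om^2_B(B/\ov{I})])$; taking $C(m)$ to be their maximum completes this step, and with the previous paragraph shows that $\{\,[\Om^2_A(A/I)]:I\in\mathcal{C}_m\,\}=\mathcal{S}_m$ is finite.

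\emph{Descent to liason classes, and the obstacle.} By \cite[Proposition 1]{MS} even linkage of the ideal $I$ is the same as even linkage of the cyclic module $A/I$, and the module-linkage theory of Section $4$ should identify, for codimension-$2$ \CM \ modules, even linkage with the operator $(\Om\,\Tr)^2$; on $\ICMS(A)$ this is governed by the duality $M\mapsto M^{*}$ and the translation $\Om^{\pm1}$, and $\Om^2$ is the identity on $\ICMS(A)$ because $A$ is a hypersurface. It follows that the even liason class of a codimension-$2$ \CM \ ideal $I$ is determined by $[\Om^2_A(A/I)]\in\ICMS(A)$ up to finitely many choices, so $\mathcal{C}_m$ is contained in finitely many even liason classes indexed by $\mathcal{S}_m$. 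The step I expect to be hardest is precisely this last one --- making the module-linkage/stable-category dictionary quantitative enough that a single isomorphism class $[\Om^2_A(A/I)]$ accounts for only finitely many even liason classes of ideals --- with, as a secondary point, the verification in the previous paragraph that $y_1,\dots,y_{d-2}$ can be chosen simultaneously $A$-regular and with images forming a minimal reduction of a system of parameters of $A/I$ (which is why $k$ must be infinite); the finite-\CM -type input is then used only as described.
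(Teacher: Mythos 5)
Your overall strategy --- attach to each $I\in\mathcal{C}_m$ a maximal \CM \ module, bound a triangle function on it uniformly in terms of $m$, invoke finite \CM \ representation type plus pigeonhole, and then translate stable isomorphism back into even linkage --- is the same as the paper's, which works with the maximal \CM \ approximation $X_{A/I}$ rather than $\Om^2_A(A/I)$ (over a hypersurface these are essentially interchangeable in $\CMS(A)$ by the $2$-periodicity of $\Om$). However, your bounding step has a genuine error and misses the one place where uncountability of $k$ is actually used. First, the assertion that there are ``only finitely many such ideals $\ov{I}$'' with $\m_B^{\,m}\subseteq\ov{I}$ is false: such ideals correspond to ideals of the Artinian ring $B/\m_B^{\,m}$, and already every $k$-subspace of $\m_B/\m_B^{2}$ is one of these, so over an infinite field there are infinitely many. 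Hence your constant $C(m)$ is never produced. What is true, and what the paper proves (Proposition \ref{estimate}), is that the \emph{values} are bounded: filtering a finite-length $B$-module $L$ with quotients $k$ and using sub-additivity of $e^T_B$ over the resulting exact triangles gives $\theta_B(L)\le\ell(L)\,\theta_B(k)$, whence the bound $m\,\theta_B(k)$. Second, even granting such a bound, your ring $B$ depends on $I$ (you choose $y_1,\dots,y_{d-2}$ adapted to $A/I$), so the bound $m\,\theta_{B_I}(k)$ is not uniform over $\mathcal{C}_m$. The paper fixes this by choosing a \emph{single} superficial sequence $\bx$ working simultaneously for the countably many modules $A\oplus X_j\oplus\Om(X_j)\oplus A/I_j$ attached to a putative infinite family of distinct liason classes: the bad locus in $\m/\m^2$ is a countable union of proper subspaces, which cannot cover it precisely because $k$ is uncountable. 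So the uncountability hypothesis enters here, not (as you suggest) only through the classification of simple singularities.

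The step you flag as the main difficulty --- passing from finitely many stable isomorphism classes to finitely many even liason classes --- is resolved in the paper by the Herzog--K\"{u}hl theorem: for Bourbaki sequences $0\to F\to M\to I\to 0$ of codimension-two \CM \ ideals over a Gorenstein domain, the middle terms $M_1,M_2$ are stably isomorphic if and only if $I_1$ and $I_2$ are evenly linked by complete intersections. Since $\Om(X_{A/I})$ (plus a free summand) is the middle term of a Bourbaki sequence for $I$, two ideals in $\mathcal{C}_m$ with stably isomorphic approximations are evenly linked, and the pigeonhole argument closes the proof by contradiction. Your proposed dictionary via $(\Om\Tr)^2$ and periodicity is not what the paper uses and, as stated, does not yet yield the required implication.
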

In the above result we use the fact that a simple simple singularity only has finitely many indecomposable maximal \CM \ modules, see \cite{K}. The assumption $K$ is uncountable is a bit irritating, however it is essential in our proof. We conjecture that the converse of this theorem is also true. 

We now describe in brief the contents of the paper. In section two we introduce the function $e^T_A(-)$ and prove some of its basic properties. In section three we prove Theorem \ref{existence}. In section 4 we discuss some results on Liason theory of modules and discuss the notion of maximal \CM \ approximations. In section 5,6,7 we prove Theorems \ref{result-1}, \ref{result-2},\ref{result-3} respectively. 
\section{Pre-triangles in $\CMa(A)$}
In this paper all rings are commutative Noetherian and all modules are assumed to be finitely generated.
In this section $(A,\m)$ is a Cohen-Macaulay local ring of dimension $d \geq 1$. Let $\ICMa(A)$ denote the set of isomorphism classes of maximal \CM \ $A$-modules and for an object $M \in \CMa(A)$ we denote its isomorphism class by $[M]$. In this section we study the function 
$$ e^T_A([M]) = \lim_{n \rt \infty} \frac{(d-1)!}{n^{d-1}}\ell\left(\Tor^A_1(M, \frac{A}{\m^{n+1}}) \right )  \quad \text{where} \  [M] \in \ICMa(A).$$ 
We also abstract some of its properties and call the notion a \emph{pre-triangle} function.
\s Let $M$ be an $A$-module. We denote it's first syzygy-module by $\Om(M)$. If we have to specify the ring then we write it as $\Om_A(M)$. Recall
$\Om(M)$ is constructed  as follows: Let $G \xrightarrow{\phi} F \xrightarrow{\epsilon} M \rt 0$ be a minimal presentation of $M$. Then $\Om(M) = \ker \epsilon$. It is easily shown that  if $G^\prime \xrightarrow{\phi^\prime} F \xrightarrow{\epsilon^\prime} M \rt 0$ is another  minimal presentation of $M$ then
$\ker \epsilon \cong \ker \epsilon^\prime$. 

Set $\Om^1(M) = \Om(M)$. For $i \geq 2$  define $\Om^i(M) = \Om(\Om^{i-1}(M))$. It can be easily proved that $\Om^i(M)$ are invariant's of $M$.

\s The function $e^T_A(-)$ arose in the authors study of certain aspects of the  theory of Hilbert functions \cite{Pu1},\cite{Pu2}. 
Let $N$ be a $A$-module of dimension $r$. It is well-known that there exists a polynomial $P_N(z) \in \mathbb{Q}[z]$ of degree $r$ such that $P_N(n) = \ell(N/\m^{n+1}N)$ for all $n \gg 0$.
We write
\[
P_N(z) = \sum_{i = 0}^{r}(-1)^ie_i(N)\binom{z+r-i}{r-i}.
\]
Then $e_0(N),\cdots,e_r(N)$ are integers and are called the \textit{Hilbert coefficients} of $N$. 
The number $e_0(N)$ is called the \textit{multiplicity} of $N$. It is positive if $N$ is non-zero. The number $e_1(N)$ is \textbf{non-negative} if $N$ is \CM; see \cite[Proposition 12]{Pu1}.  Also note that
\[
\sum_{n\geq 0}\ell(N/\m^{n+1}N) z^n  = \frac{h_N(z)}{(1-z)^{r+1}};
\]
where $h_M(z)\in \ZZ[z]$ with $e_i(N) = h^{(i)}(1)/i!$ for $i = 0,\ldots,r$.

\s\label{basic} Let $M \in \CMa(A)$. In \cite[Prop. 17]{Pu1} we proved that the function
$$ n \mapsto \ell\left(\Tor^A_1(M, \frac{A}{\m^{n+1}}) \right )$$
is of polynomial type, i.e., it coincides with a polynomial $t_M(z)$ for all $n \gg 0$. In \cite[Theorem 18]{Pu1} we also proved that
\begin{enumerate}
\item
$M$ is free if and only if $\deg t_M(z) < d-1$.
\item
If $M$ is not free then $\deg t_M(z) = d-1$ and the normalized leading coefficient of $t_M(z)$ is 
$\mu(M)e_1(A) - e_1(M) - e_1(\Om(M))$; here $\mu(M)$ denotes the minimal number of generators of $M$.
\item
For any $M \in \CMa(A)$,
\begin{align*}
e^T_A(M) &= \lim_{n \rt \infty} \frac{(d-1)!}{n^{d-1}}\ell\left(\Tor^A_1(M, \frac{A}{\m^{n+1}}) \right ) \\
          &= \mu(M)e_1(A) - e_1(M) - e_1(\Om(M)).
\end{align*}
\end{enumerate}
 By (1) note that $e^T_A(M) = 0$ if and only if $M$ is free. Otherwise $e^T_A(M) > 0$ Infact $e^T_A(M) \geq e_0(\Om(M))$; \cite[Lemma 19]{Pu1}.
 
 Our first result shows that we need not confine to minimal presentation to compute
 $e_A^T(M)$. 
\begin{lemma}\label{arbit-pres}
Let $M \in \CMa(A)$ and let $0 \rt N \rt F \rt M \rt 0$ be an exact sequence in $\CMa(A)$ with $F$ free. Then
\[
e^T_A(M) = e_1(F)-e_1(M) -e_1(N).
\]
\end{lemma}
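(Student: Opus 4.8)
The plan is to reduce the arbitrary presentation to a minimal one, for which the formula is already known from \ref{basic}, via Schanuel's lemma. First I would recall from \ref{basic}(3) that if $0 \rt \Om(M) \rt A^{\mu(M)} \rt M \rt 0$ is a minimal presentation of $M$, then
\[
e^T_A(M) = \mu(M)\, e_1(A) - e_1(M) - e_1(\Om(M)),
\]
and note that $\Om(M) \in \CMa(A)$ since $M$ is maximal \CM \ and $A$ is \CM. All modules occurring below are thus maximal \CM \ (or zero), so their Hilbert polynomials, and hence all their Hilbert coefficients in the sense of \ref{basic} in dimension $d$, are defined; for the zero module all Hilbert coefficients vanish.

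Next I would apply Schanuel's lemma to the two surjections $F \rt M$ and $A^{\mu(M)} \rt M$, obtaining an isomorphism of $A$-modules
\[
N \oplus A^{\mu(M)} \cong \Om(M) \oplus F .
\]
Since $\ell\big((X\oplus Y)/\m^{n+1}(X\oplus Y)\big) = \ell(X/\m^{n+1}X) + \ell(Y/\m^{n+1}Y)$ for all $n$, the Hilbert polynomial is additive on direct sums, and therefore so is $e_1$; also $e_1(A^{t}) = t\, e_1(A)$ for any $t \geq 0$. Applying $e_1$ to the displayed isomorphism gives
\[
e_1(N) + \mu(M)\, e_1(A) = e_1(\Om(M)) + e_1(F),
\]
that is, $\mu(M)\, e_1(A) = e_1(\Om(M)) + e_1(F) - e_1(N)$.

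Finally I would substitute this into the formula for $e^T_A(M)$; the terms $e_1(\Om(M))$ cancel and we are left with $e^T_A(M) = e_1(F) - e_1(M) - e_1(N)$, which is the claim. I do not expect a genuine obstacle here: the argument is essentially ``Schanuel plus additivity of $e_1$ on direct sums''. The only points needing (routine) care are checking that $e_1$ is indeed additive on direct sums (immediate from the length identity above) and keeping the degenerate cases consistent — if $N = 0$ then $M$ is free, $\Om(M)=0$, and both sides of the asserted identity are $0$, in agreement with \ref{basic}(1).
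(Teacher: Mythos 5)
Your proposal is correct and is essentially the paper's own proof: Schanuel's lemma applied to the two presentations gives $A^{\mu(M)}\oplus N \cong F \oplus \Om(M)$, and additivity of $e_1$ on direct sums combined with the formula $e^T_A(M) = \mu(M)e_1(A) - e_1(M) - e_1(\Om(M))$ from \ref{basic} yields the claim. The extra checks you include (additivity of $e_1$, the degenerate case $N=0$) are routine and consistent with the paper's argument.
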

\begin{proof}
By Schanuel's Lemma \cite[Lemma 3, section 19]{Mat} we have $A^{\mu(M)}\oplus N \cong F \oplus \Om(M)$. So 
\[
\mu(M)e_1(A) + e_1(N) = e_1(F) + e_1(\Om(M)).
\]
The result follows.
\end{proof}

Our next result shows that $e_1(-)$ is sub-additive over short-exact sequences in $\CMa(A)$.

\begin{proposition}
Let $0 \rt M_1 \rt M_2 \rt M_3 \rt 0$ be a short-exact sequence in $\CMa(A)$. Then
$$ e_1(M_2) \geq e_1(M_1) + e_1(M_3).$$ 
\end{proposition}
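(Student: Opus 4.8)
The plan is to deduce the inequality from a single exact sequence of finite-length modules together with a comparison of Hilbert polynomials. For each $n$, intersecting the given short exact sequence with the $\m$-adic filtration of $M_2$ yields the exact sequence
\[
0 \rt \frac{M_1}{M_1 \cap \m^{n+1}M_2} \rt \frac{M_2}{\m^{n+1}M_2} \rt \frac{M_3}{\m^{n+1}M_3} \rt 0,
\]
so that $\ell(M_2/\m^{n+1}M_2) = \ell\big(M_1/(M_1 \cap \m^{n+1}M_2)\big) + \ell(M_3/\m^{n+1}M_3)$ for all $n$. By the Artin--Rees lemma, the filtration $\mathcal{F}_n := M_1 \cap \m^{n+1}M_2$ is a stable $\m$-filtration on $M_1$, so $Q(n) := \ell(M_1/\mathcal{F}_n)$ agrees for $n \gg 0$ with a polynomial $Q(z)$ of degree $d = \dim M_1$; write $Q(z) = \sum_{i=0}^{d}(-1)^i q_i\binom{z+d-i}{d-i}$. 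The length identity above then becomes $P_{M_2}(z) = Q(z) + P_{M_3}(z)$, and comparing the coefficients of $\binom{z+d-1}{d-1}$ gives $e_1(M_2) = q_1 + e_1(M_3)$. So it suffices to show $q_1 \geq e_1(M_1)$.

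To obtain this I would sandwich the filtration $\mathcal{F}_\bullet$ between two shifts of the $\m$-adic filtration of $M_1$. On one side, $\m^{n+1}M_1 \subseteq \mathcal{F}_n$ gives $Q(n) \leq P_{M_1}(n)$ for $n \gg 0$; on the other, $\mathcal{F}_n = \m^{n+1-c}\big(\m^c M_2 \cap M_1\big) \subseteq \m^{n+1-c}M_1$ for a suitable Artin--Rees constant $c$ and all $n \geq c$, giving $Q(n) \geq P_{M_1}(n-c)$ for $n \gg 0$. Since $P_{M_1}(z)$ and $P_{M_1}(z-c)$ have the same leading coefficient $e_0(M_1)/d!$, this forces $q_0 = e_0(M_1)$ (equivalently: the multiplicity is unchanged on passing to a stable $\m$-filtration). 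Knowing $q_0 = e_0(M_1)$, the polynomial $P_{M_1}(z) - Q(z) = (q_1 - e_1(M_1))\binom{z+d-1}{d-1} + (\text{lower-order terms})$ has degree $\leq d-1$ and is non-negative for $n \gg 0$; since $\binom{z+d-1}{d-1}$ has positive leading coefficient, its top coefficient cannot be negative, forcing $q_1 - e_1(M_1) \geq 0$. Combining with $e_1(M_2) = q_1 + e_1(M_3)$ gives $e_1(M_2) \geq e_1(M_1) + e_1(M_3)$.

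I expect the only delicate point to be the identity $q_0 = e_0(M_1)$, i.e. that multiplicity does not change when the $\m$-adic filtration on $M_1$ is replaced by the stable filtration $\mathcal{F}_\bullet$; this is exactly where the two-sided Artin--Rees containment $\m^{n+1}M_1 \subseteq \mathcal{F}_n \subseteq \m^{n+1-c}M_1$ is used. One should also record at the start that $P_{M_1}$, $P_{M_2}$, $P_{M_3}$ and $Q$ all have degree exactly $d$, which is where the hypothesis $M_1, M_2, M_3 \in \CMa(A)$ is needed so that the coefficients $e_1(-)$ and $q_1$ sit in the expected slots of the binomial expansion.
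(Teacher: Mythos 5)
Your argument is correct. It rests on the same underlying decomposition as the paper's proof: your module $M_1/(M_1\cap\m^{n+1}M_2)$ is exactly the image of $M_1/\m^{n+1}M_1$ in $M_2/\m^{n+1}M_2$, so your quantity $P_{M_1}(n)-Q(n)$ coincides for $n\gg 0$ with the length of the kernel $K_n$ that the paper introduces, and in both proofs the inequality ultimately comes from the non-negativity of this length. Where you genuinely diverge is in how the leading term is identified. The paper never invokes Artin--Rees: it writes the generating function $\sum_n \ell(K_n)z^n$ as $\bigl(h_{M_1}(z)-h_{M_2}(z)+h_{M_3}(z)\bigr)/(1-z)^{d+1}$, cancels one factor of $(1-z)$ using $e_0$-additivity, and reads off the normalized leading coefficient $e_1(M_2)-e_1(M_1)-e_1(M_3)$ by evaluating at $z=1$; eventual polynomiality of $\ell(K_n)$ falls out of the computation for free. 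You instead import the theory of stable $\m$-filtrations: Artin--Rees gives you that $Q(n)=\ell(M_1/\mathcal{F}_n)$ is eventually polynomial of degree $d$, the two-sided sandwich $\m^{n+1}M_1\subseteq\mathcal{F}_n\subseteq\m^{n+1-c}M_1$ pins down $q_0=e_0(M_1)$ (which, incidentally, you could also get for free from $q_0=e_0(M_2)-e_0(M_3)=e_0(M_1)$ by additivity of multiplicity), and the sign of the top binomial coefficient of the non-negative polynomial $P_{M_1}-Q$ forces $q_1\ge e_1(M_1)$. Your version makes the intermediate invariant $q_1$ (the first Hilbert coefficient of the induced filtration on $M_1$) explicit and yields the slightly sharper statement $e_1(M_2)=q_1+e_1(M_3)$ with $q_1\ge e_1(M_1)$; the paper's version is shorter and avoids any appeal to Artin--Rees, at the cost of routing everything through the $h$-polynomial formalism it has already set up.
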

\begin{proof}
Note $e_0(M_2) = e_0(M_1) + e_0(M_3)$. For $n \geq 0$ we define modules $K_n$ by the exact sequence
\[
0 \rt K_n \rt \frac{M_1}{\m^{n+1}M_1} \rt \frac{M_2}{\m^{n+1}M_2} \rt \frac{M_3}{\m^{n+1}M_3} \rt 0.
\]
It follows that
\[
\sum_{n\geq 0} \ell(K_n)z^n  = \frac{h_{M_1}(z) - h_{M_2}(z) + h_{M_3}(z)}{(1-z)^{d+1}}.
\]
Since $e_0(M_2) = e_0(M_1) + e_0(M_3)$ we have that $ h_{M_1}(z) - h_{M_2}(z) + h_{M_3}(z)
= (1-z)l_K(z)$ for some $l_K(z) \in \ZZ[z]$. So we have
\[
\sum_{n\geq 0} \ell(K_n)z^n  = \frac{l_K(z)}{(1-z)^d}.
\]
Notice $l_K(1) = e_1(M_2) - e_1(M_1) - e_1(M_3)$. It follows that for all $n \gg 0$
\[
 \ell(K_n) = \left( e_1(M_2) - e_1(M_1) - e_1(M_3)\right)\frac{n^{d-1}}{(d-1)!} + \ \text{lower terms in } \ n.
\]
So $e_1(M_2) \geq  e_1(M_1) + e_1(M_3)$.
\end{proof}
We now prove that $e^T_A(-)$ is sub-additive over short-exact sequences in $\CMa(A)$.
\begin{theorem}\label{sub-add}
Let  $0 \rt M_1 \rt M_2 \rt M_3 \rt 0$ be a short-exact sequence in $\CMa(A)$. Then
\[
e^T_A(M_2) \leq e^T_A(M_1) + e^T_A(M_3).
\]
\end{theorem}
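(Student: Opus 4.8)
The plan is to deduce subadditivity of $e^T_A(-)$ from the subadditivity of $e_1(-)$ established in the preceding Proposition, together with Lemma \ref{arbit-pres}, by means of a Horseshoe Lemma construction that produces compatible free presentations of the three modules.

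First I would choose surjections $\epsilon_1\colon F_1 \rt M_1$ and $\epsilon_3\colon F_3 \rt M_3$ with $F_1, F_3$ finitely generated free. Applying the Horseshoe Lemma to the given sequence yields a surjection $F_1\oplus F_3 \rt M_2$ fitting into a commutative diagram whose rows are $0 \rt F_1 \rt F_1\oplus F_3 \rt F_3 \rt 0$ and $0 \rt M_1 \rt M_2 \rt M_3 \rt 0$, and whose columns are these three presentations. Setting $K_i = \ker\epsilon_i$ for $i = 1,3$ and $K = \ker(F_1\oplus F_3 \rt M_2)$, the snake lemma gives a short exact sequence $0 \rt K_1 \rt K \rt K_3 \rt 0$. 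Next I would check that $K_1, K_3, K$ all lie in $\CMa(A)$: since $A$ is \CM{} of dimension $d$ and each sits as the kernel of a surjection from a free (hence maximal \CM) module onto a maximal \CM{} module, the depth lemma gives $\depth \geq \min(d, d+1) = d$, so all three are maximal \CM.

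With this in place, Lemma \ref{arbit-pres} applies to each of the three columns, giving
\[
e^T_A(M_1) = e_1(F_1) - e_1(M_1) - e_1(K_1), \qquad e^T_A(M_3) = e_1(F_3) - e_1(M_3) - e_1(K_3),
\]
\[
e^T_A(M_2) = e_1(F_1\oplus F_3) - e_1(M_2) - e_1(K) = e_1(F_1) + e_1(F_3) - e_1(M_2) - e_1(K),
\]
using $e_1(F_1\oplus F_3) = e_1(F_1) + e_1(F_3)$. Now I would invoke the preceding Proposition twice: applied to $0 \rt M_1 \rt M_2 \rt M_3 \rt 0$ it gives $e_1(M_2) \geq e_1(M_1) + e_1(M_3)$, and applied to $0 \rt K_1 \rt K \rt K_3 \rt 0$ it gives $e_1(K) \geq e_1(K_1) + e_1(K_3)$. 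Substituting both inequalities into the expression for $e^T_A(M_2)$ and comparing with the sum of the expressions for $e^T_A(M_1)$ and $e^T_A(M_3)$ yields $e^T_A(M_2) \leq e^T_A(M_1) + e^T_A(M_3)$.

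There is essentially no hard step here: the two points requiring care are the Horseshoe Lemma bookkeeping (verifying that the induced column over $M_2$ is genuinely built on $F_1\oplus F_3$ with the stated kernel sequence), and the verification that $K, K_1, K_3$ are maximal \CM{} so that all invocations of Lemma \ref{arbit-pres} and of the Proposition are legitimate. Note that the argument needs only that $A$ is \CM, not Gorenstein, consistent with the hypotheses of this section; and it is insensitive to whether the chosen free presentations are minimal, since Lemma \ref{arbit-pres} allows arbitrary free covers.
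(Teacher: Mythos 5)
Your proposal is correct and is essentially the paper's own proof: both use the Horseshoe Lemma to build compatible free presentations with middle term $F_1\oplus F_3$, apply Lemma \ref{arbit-pres} to each column, and invoke the preceding Proposition on both the module sequence and the kernel sequence. Your extra check that the kernels are maximal \CM\ (via the depth lemma) is a detail the paper leaves implicit, but otherwise the arguments coincide.
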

\begin{proof}
By a standard result in homological algebra we have the following diagram with exact rows and columns; with $F_i$ free $A$-modules for $i = 1,2,3$:
\[
  \xymatrix
{
\
&0
\ar@{->}[d]
&0
\ar@{->}[d]
&0
\ar@{->}[d]
\
\\
 0
 \ar@{->}[r]
  & N_1
\ar@{->}[r]
\ar@{->}[d]
 & N_2
\ar@{->}[r]
\ar@{->}[d]
& N_3
\ar@{->}[r]
\ar@{->}[d]
&0
\\
 0
 \ar@{->}[r]
  &F_1
    \ar@{->}[d]
\ar@{->}[r]
 & F_2
    \ar@{->}[d]
\ar@{->}[r]
& F_3
    \ar@{->}[d]
    \ar@{->}[r]
    &0
 \\
 0
 \ar@{->}[r]
  & M_3
\ar@{->}[r]
\ar@{->}[d]
 & M_2
\ar@{->}[r]
\ar@{->}[d]
& M_1
\ar@{->}[r]
\ar@{->}[d]
&0
\\
\
&0
&0
&0
\
 }
\]
Note $F_2 \cong F_1 \oplus F_3$. So $e_1(F_2) = e_1(F_1) + e_1(F_3)$. However
$e_1(M_2) \geq e_1(M_1) + e_1(M_3)$ and $e_1(N_2) \geq e_1(N_1) + e_1(N_3)$.

By Lemma 2.4 we have $e^T_A(M_i) = e_1(F_i) - e_1(M_i) - e_1(N_i)$ for $i = 1,2,3$.
The result follows.
\end{proof}

\s Let us recall the definition of superficial elements.  Let $N$ be an $A$-module. An element $x \in \m \setminus \m^2$ is said to be $N$-\textit{superficial} if there exists $c > 0$ such that $(\m^{n+1}N \colon x)\cap \m^cN = \m^nN$ for all $n \gg 0$. It is well-known that superficial elements exist when the residue field $k$ of $A$ is infinite.
If depth $N > 0$ then one can prove that a $N$-superficial element $x$ is $N$-regular. Furthermore  $(\m^{n+1}N \colon x) = \m^nN$ for all $n \gg 0$. 

\s \label{H-mod-sup} \emph{Behavior of Hilbert coefficients \wrt \ superficial elements:} Assume $N$ is an $A$-module with $\depth N > 0$ and dimension $r \geq 1$. Let $x$ be  $N$-superficial.
Then by \cite[Corollary 10]{Pu1} we have
\[
e_i(N/xN) = e_i(N) \quad \text{for} \ i = 0,\ldots, r-1.
\]

Our next result shows that $e^T_A(-)$ behaves well mod superficial elements.
\begin{proposition}\label{mod-sup}
Suppose $\dim A \geq 2$ and let $M \in \CMa(A)$. Assume the residue field $k$ is infinite. Let $x$ be $A \oplus M \oplus \Om_A(M)$-superficial. Set $B = A/(x)$ and $N = M/xM$. Then
\[
e^T_B(N) = e^T_A(M).
\]
\end{proposition}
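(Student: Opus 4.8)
The plan is to reduce everything to the formula in \ref{basic}(3), namely $e^T_A(M) = \mu(M)e_1(A) - e_1(M) - e_1(\Om_A(M))$, and its analogue over $B$. First I would observe that since $x$ is $M$-superficial and $\depth M = \dim A \geq 2 > 0$, the element $x$ is $M$-regular, so $N = M/xM$ is a maximal \CM{} $B$-module with $\mu_B(N) = \mu_A(M)$; similarly $x$ is $A$-regular and $A$-superficial, so $B = A/(x)$ is again Gorenstein local of dimension $d - 1 \geq 1$. Thus $e^T_B(N)$ is defined and equals $\mu_B(N)e_1(B) - e_1(N) - e_1(\Om_B(N))$.

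Next I would handle the syzygy. Take a minimal presentation $0 \rt \Om_A(M) \rt A^{\mu(M)} \rt M \rt 0$; since $x$ is regular on all three modules (here using that $x$ is $\Om_A(M)$-superficial hence $\Om_A(M)$-regular, as $\Om_A(M)$ is also maximal \CM{} of positive depth), reducing mod $x$ gives an exact sequence $0 \rt \Om_A(M)/x\Om_A(M) \rt B^{\mu(M)} \rt N \rt 0$ of $B$-modules. This is a presentation of $N$ over $B$, and since $\mu_B(N) = \mu_A(M)$ it is in fact \emph{minimal}; hence $\Om_B(N) \cong \Om_A(M)/x\Om_A(M)$. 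Now I invoke \ref{H-mod-sup}: for each of the three modules $A$, $M$, $\Om_A(M)$ — all of dimension $d \geq 2$ and positive depth, with $x$ superficial for each — the coefficient $e_1$ is preserved under the reduction, i.e.\ $e_1(B) = e_1(A)$, $e_1(N) = e_1(M)$, and $e_1(\Om_B(N)) = e_1(\Om_A(M)/x\Om_A(M)) = e_1(\Om_A(M))$. (This uses $r - 1 \geq 1$, i.e.\ $d \geq 2$, which is exactly the hypothesis.)

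Finally I would just substitute:
\[
e^T_B(N) = \mu_B(N)e_1(B) - e_1(N) - e_1(\Om_B(N)) = \mu(M)e_1(A) - e_1(M) - e_1(\Om_A(M)) = e^T_A(M).
\]
The one point that needs a little care — and what I'd flag as the main obstacle — is justifying that the reduction of a \emph{minimal} presentation mod the superficial element $x$ is again minimal, i.e.\ that $\Om_B(N) \cong \Om_A(M)/x\Om_A(M)$ rather than just a non-minimal syzygy; this follows from the equality $\mu_B(N) = \mu_A(M)$ together with Nakayama, since $x \in \m$ so $N/\m N \cong M/\m M$. Everything else is a bookkeeping application of the already-quoted results on the behavior of Hilbert coefficients under superficial elements and the closed formula for $e^T_A$.
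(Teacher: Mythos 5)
Your proposal is correct and follows essentially the same route as the paper: apply the closed formula $e^T([M]) = \mu(M)e_1 - e_1(M) - e_1(\Om(M))$ over both rings, use \ref{H-mod-sup} to preserve each $e_1$ under the superficial element (which needs $d \geq 2$, as you note), and identify $\Om_B(N)$ with $\Om_A(M)/x\Om_A(M)$. The paper states this last isomorphism without proof, whereas you supply the (correct) justification via regularity of $x$ on the minimal presentation and the equality $\mu_B(N) = \mu_A(M)$.
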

\begin{proof}
Note
\begin{align*}
e^T_A(M) &= e_1(A)\mu(M) -e_1(M) - e_1(\Om_A(M)), \\
          &= e_1(B)\mu(N) - e_1(N) - e_1(\Om_A(M)/x\Om_A(M))
\end{align*}
The result follows from observing that $\Om_A(M)/x\Om_A(M) \cong \Om_B(M/xM)$.
\end{proof}

\s We now abstract some of the essential properties of $e^T_A(-)$. 

We say a function $\xi \colon \ICMa(A) \rt \ZZ$ is a \emph{pre-triangle} 
 function if it satisfies the following properties:
\begin{enumerate}
\item
$\xi([M]) \geq 0$ for all $M \in \CMa(A)$.
\item
$\xi([M]) = 0$ if and only if $M$ is free.
\item
$\xi([M_1 \oplus M_2]) = \xi([M_1]) + \xi([M_2])$ for all $M_1, M_2 \in \CMa(A)$.
\item 
\emph{
(sub-additivity)} If $0\rt M \rt N \rt L \rt 0$ is an exact sequence in $\CMa(A)$ then
$$\xi([N]) \leq \xi([M]) + \xi([L]).$$
\end{enumerate}
We state our basic existence result of pre-triangle functions. 
\begin{theorem}\label{existence-C}
Let $(A,\m)$ be a \CM \ local ring of dimension $d \geq 1$. Then the function
$$ e^T_A([M]) = \lim_{n \rt \infty} \frac{(d-1)!}{n^{d-1}}\ell\left(\Tor^A_1(M, \frac{A}{\m^{n+1}}) \right )  \quad \text{where} \  [M] \in \ICMS(A)$$
is a pre-triangle function on $\ICMa(A)$.
\end{theorem}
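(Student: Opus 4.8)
The plan is simply to verify the four defining properties of a pre-triangle function one at a time, since each has already been prepared in the material above.

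\emph{Axioms (1) and (2).} Both follow at once from the analysis recorded in \ref{basic}. By \cite[Theorem 18]{Pu1} the function $n \mapsto \ell(\Tor^A_1(M, A/\m^{n+1}))$ agrees for $n \gg 0$ with a polynomial $t_M(z)$ whose degree is $< d-1$ exactly when $M$ is free and is $= d-1$ with positive normalized leading coefficient otherwise; moreover in the non-free case $e^T_A(M) \geq e_0(\Om(M)) > 0$. Hence $e^T_A(M) \geq 0$ for every $M \in \CMa(A)$, and $e^T_A(M) = 0$ if and only if $M$ is free, which is (1) and (2).

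\emph{Axiom (3).} Here I would use the closed formula $e^T_A(M) = \mu(M)e_1(A) - e_1(M) - e_1(\Om(M))$ from \ref{basic}(2)--(3) together with the observation that each ingredient is additive over direct sums. Indeed $\mu(M_1 \oplus M_2) = \mu(M_1) + \mu(M_2)$; the direct sum of minimal presentations of $M_1$ and $M_2$ is again a minimal presentation, so $\Om(M_1 \oplus M_2) \cong \Om(M_1) \oplus \Om(M_2)$; and since $(M_1 \oplus M_2)/\m^{n+1}(M_1 \oplus M_2) \cong M_1/\m^{n+1}M_1 \oplus M_2/\m^{n+1}M_2$ for all $n$, the Hilbert polynomials add, hence so do all the $e_i$. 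Substituting into the formula yields $e^T_A(M_1 \oplus M_2) = e^T_A(M_1) + e^T_A(M_2)$. Alternatively one argues directly from $\Tor^A_1(M_1 \oplus M_2, -) \cong \Tor^A_1(M_1, -) \oplus \Tor^A_1(M_2, -)$ and additivity of length.

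\emph{Axiom (4).} This is precisely Theorem \ref{sub-add}. Consequently the only genuine content lies in Theorem \ref{sub-add} and in the polynomiality and leading-coefficient computations of \ref{basic}, all of which are already available, so I anticipate no new obstacle; the statement is essentially a repackaging. If anything deserves a second glance it is the compatibility of minimal presentations invoked in axiom (3), but this is routine.
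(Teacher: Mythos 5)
Your proof is correct and follows exactly the same route as the paper's: axioms (1) and (2) from the results collected in \ref{basic}, axiom (3) by direct-sum additivity (which the paper simply calls trivial and you spell out via the closed formula, or equivalently via additivity of $\Tor_1$), and axiom (4) from Theorem \ref{sub-add}. Nothing further is needed.
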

\begin{proof}
Properties (1), (2) are satisfied by \ref{basic}. Property (3) is trivially satisfied.
Property (4) is satisfied by Theorem \ref{sub-add}.
\end{proof}

\s \label{inf-pre} If $\xi$ is a pre-triangle function then trivially $k\xi$ is a pre-triangle function for any $k \geq 1$. Perhaps less-obvious is  the following:
\begin{proposition}\label{pr-inf-pre}
Let $\xi$ be a pre-triangle function. Then the function \\ $\xi^{(i)} \colon \ICMa(A) \rt \ZZ$ defined by
$$ \xi^{(i)}([M]) = \xi([\Om^i(M)])$$
is a pre-triangle function for all $i \geq 0$. 
\end{proposition}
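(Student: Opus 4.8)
The plan is to verify the four axioms of a pre-triangle function for $\xi^{(i)}$, reducing each to the corresponding fact for $\xi$ together with elementary properties of the syzygy operator $\Om$. Axioms (1) and (2) are the easy cases. Nonnegativity of $\xi^{(i)}([M]) = \xi([\Om^i(M)])$ is immediate from axiom (1) for $\xi$. For axiom (2), I would use the standard fact that over a \CM\ local ring the syzygy $\Om^i(M)$ of a maximal \CM\ module is free if and only if $M$ is free: one direction is clear, and for the converse, if $\Om(M)$ is free then the minimal presentation $0 \rt \Om(M) \rt F \rt M \rt 0$ splits (since $\Om(M)$ is taken from a \emph{minimal} presentation, hence $\Om(M) \subseteq \m F$, forcing $\Om(M) = 0$), so $M \cong F$; iterate. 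Combining this with axiom (2) for $\xi$ gives $\xi^{(i)}([M]) = 0 \iff \Om^i(M)$ free $\iff M$ free.

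For axiom (3), additivity on direct sums, I would observe that $\Om$ commutes with finite direct sums up to free summands: if $0 \rt \Om(M_1) \rt F_1 \rt M_1 \rt 0$ and $0 \rt \Om(M_2) \rt F_2 \rt M_2 \rt 0$ are minimal presentations, then their direct sum is a minimal presentation of $M_1 \oplus M_2$, so $\Om(M_1 \oplus M_2) \cong \Om(M_1) \oplus \Om(M_2)$, and by induction $\Om^i(M_1 \oplus M_2) \cong \Om^i(M_1) \oplus \Om^i(M_2)$. Then axiom (3) for $\xi$ gives $\xi^{(i)}([M_1 \oplus M_2]) = \xi([\Om^i(M_1)]) + \xi([\Om^i(M_2)]) = \xi^{(i)}([M_1]) + \xi^{(i)}([M_2])$.

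The main work is axiom (4), sub-additivity. Given a short exact sequence $0 \rt M \rt N \rt L \rt 0$ in $\CMa(A)$, I need a short exact sequence relating $\Om(M)$, $\Om(N)$, $\Om(L)$. The horseshoe lemma, applied with the chosen minimal free covers, produces a commutative diagram with exact rows and columns
\[
\xymatrix{
0 \ar[r] & \Om(M) \ar[r] \ar[d] & K \ar[r] \ar[d] & \Om(L) \ar[r] \ar[d] & 0 \\
0 \ar[r] & F_M \ar[r] \ar[d] & F_M \oplus F_L \ar[r] \ar[d] & F_L \ar[r] \ar[d] & 0 \\
0 \ar[r] & M \ar[r] \ar[d] & N \ar[r] \ar[d] & L \ar[r] \ar[d] & 0 \\
& 0 & 0 & 0 &
}
\]
where $F_M \to M$ and $F_L \to L$ are the minimal free covers and $K = \ker(F_M \oplus F_L \rt N)$. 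The subtlety is that the middle column need not be a minimal presentation of $N$, so $K$ is only \emph{a} first syzygy of $N$ in the sense of an arbitrary free cover, not necessarily the minimal one $\Om(N)$. I would handle this with Schanuel's lemma: $K \oplus A^{\mu(N)} \cong (F_M \oplus F_L) \oplus \Om(N)$, so $K \cong \Om(N) \oplus A^{t}$ for some $t \geq 0$ (all modules being maximal \CM), hence $[K] = [\Om(N) \oplus A^t]$ and, by axioms (2) and (3) for $\xi$, $\xi([K]) = \xi([\Om(N)]) + t\,\xi([A]) = \xi([\Om(N)])$. Now the top row $0 \rt \Om(M) \rt K \rt \Om(L) \rt 0$ is a short exact sequence in $\CMa(A)$ (its terms are maximal \CM\ since $A$ is \CM\ and they are syzygies), so axiom (4) for $\xi$ gives $\xi([K]) \leq \xi([\Om(M)]) + \xi([\Om(L)])$, i.e. $\xi([\Om(N)]) \leq \xi([\Om(M)]) + \xi([\Om(L)])$. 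Iterating the construction $i$ times (or inducting on $i$, using that the $i$-th iterate of the horseshoe construction gives a short exact sequence of $i$-th syzygies up to free summands) yields $\xi([\Om^i(N)]) \leq \xi([\Om^i(M)]) + \xi([\Om^i(L)])$, which is exactly $\xi^{(i)}([N]) \leq \xi^{(i)}([M]) + \xi^{(i)}([L])$. Finally the case $i=0$ is the hypothesis that $\xi$ is a pre-triangle function, anchoring the induction. The one point to be careful about — and the place where I expect the only real friction — is the bookkeeping that passing from $K$ to $\Om(N)$ changes nothing at the level of $\xi$; once that is isolated as a lemma (stability of $\xi$ under adding free summands, immediate from (2) and (3)), the rest is a clean induction.
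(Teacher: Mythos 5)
Your proof of axiom (4) is essentially the paper's argument: the paper simply asserts the existence of a short exact sequence $0 \rt \Om(M_1) \rt \Om(M_2)\oplus F \rt \Om(M_3) \rt 0$ with $F$ free, which is exactly the output of your horseshoe-plus-Schanuel bookkeeping, and then applies sub-additivity of $\xi$ together with $\xi([\Om(M_2)\oplus F]) = \xi([\Om(M_2)])$. The paper also organizes the passage to general $i$ slightly more cleanly, by noting $\xi^{(i)} = (\xi^{(i-1)})^{(1)}$ and reducing to the case $i=1$; your iteration of the construction amounts to the same thing. (Axioms (1)--(3) are left to the reader in the paper.)

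One step in your treatment of axiom (2) is wrong as stated: a short exact sequence $0 \rt \Om(M) \rt F \rt M \rt 0$ with free kernel need not split (take $A = k[[x]]$ and $M = A/(x)$: the kernel $xA$ is free, yet $M$ is not), and a nonzero free submodule can perfectly well be contained in $\m F$, so neither half of your parenthetical argument goes through. The correct reason that $\Om(M)$ free forces $M$ free is that $M$ is maximal Cohen--Macaulay: freeness of $\Om(M)$ gives $\projdim_A M < \infty$, and then Auslander--Buchsbaum yields $\projdim_A M = \depth A - \depth M = 0$. This is a local repair of a standard fact (which the paper itself does not prove); the main argument stands.
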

\begin{proof}
Note $\xi^{(0)} = \xi$. Also note that for $i \geq 2$ we have 
$$\xi^{(i)} = \left(\xi^{(i-1)}\right)^{(1)}.$$
So it suffices to prove that $\nu = \xi^{(1)}$ is a pre-triangle function. 

It is very easy to prove that $\nu$ satisfies properties (1), (2) and (3) and is left to the reader. We prove that $\nu$ satisfies property (4). 
Let $0 \rt M_1 \rt M_2 \rt M_3 \rt 0$ be a short exact sequence in $\CMa(A)$. Note that we have a short exact sequence
\[
0 \rt \Om(M_1) \rt \Om(M_2)\oplus F \rt \Om(M_3) \rt 0;
\] 
where $F$ is a finitely generated free $A$-module (possibly zero).
Since $\xi$ is a pre-triangle function we have
\[
\xi([\Om(M_2])) = \xi([\Om(M_2)\oplus F]) \leq \xi([\Om(M_1)]) + \xi([\Om(M_3)])
\]
The result follows.
\end{proof}
\begin{remark}
In general $\xi^{(i)}$ will be different from $\xi$. For instance if $\xi = e^T_A(-)$ and if the betti-numbers of $M$ are unbounded then note as $e^T_A(M) \geq e_0(\Omega(M)) \geq \mu(\Omega(M))$, see \cite[Lemma 19]{Pu1}, we get that for $i \gg 0$ we have
$e^T(\Omega^i(M)) > e^T(M)$. So in this case $\xi^{(i)}(M) \neq \xi(M)$.
\end{remark}
The following easy proposition (proof left to the reader) combined with \ref{inf-pre} and \ref{pr-inf-pre} yields yet another  abundant number of  pre-triangle functions.

\begin{proposition}\label{add}
Let $\xi_1, \xi_2$ be two pre-triangle functions. Then $\xi = \xi_1 + \xi_2$ is a pre-triangle function. \qed
\end{proposition}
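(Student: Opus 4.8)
The statement to prove is that if $\xi_1,\xi_2$ are pre-triangle functions then $\xi = \xi_1 + \xi_2$ is a pre-triangle function. The plan is simply to verify the four defining axioms one at a time, in each case reducing to the corresponding axiom for $\xi_1$ and $\xi_2$ separately; there is no real obstacle here, the whole point being that all four axioms are preserved under pointwise addition because $\ZZ$ is an ordered abelian group in which the relevant inequalities add.

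First I would check non-negativity (axiom (1)): for every $M \in \CMa(A)$ we have $\xi_i([M]) \geq 0$ for $i = 1,2$, hence $\xi([M]) = \xi_1([M]) + \xi_2([M]) \geq 0$. Next, for axiom (2), suppose $M$ is free; then $\xi_1([M]) = \xi_2([M]) = 0$ by axiom (2) for each $\xi_i$, so $\xi([M]) = 0$. Conversely if $\xi([M]) = 0$, then since $\xi_1([M]) \geq 0$ and $\xi_2([M]) \geq 0$ and they sum to $0$, both must vanish; applying axiom (2) for $\xi_1$ (say) gives that $M$ is free. For axiom (3), additivity over direct sums: given $M_1,M_2 \in \CMa(A)$ we have $\xi([M_1 \oplus M_2]) = \xi_1([M_1 \oplus M_2]) + \xi_2([M_1 \oplus M_2]) = (\xi_1([M_1]) + \xi_1([M_2])) + (\xi_2([M_1]) + \xi_2([M_2])) = \xi([M_1]) + \xi([M_2])$.

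Finally, for sub-additivity (axiom (4)): let $0 \rt M \rt N \rt L \rt 0$ be exact in $\CMa(A)$. Then $\xi_1([N]) \leq \xi_1([M]) + \xi_1([L])$ and $\xi_2([N]) \leq \xi_2([M]) + \xi_2([L])$; adding these two inequalities yields $\xi([N]) = \xi_1([N]) + \xi_2([N]) \leq (\xi_1([M]) + \xi_2([M])) + (\xi_1([L]) + \xi_2([L])) = \xi([M]) + \xi([L])$. This completes the verification. As noted, there is no genuine difficulty; the only thing worth being careful about is the reverse direction of axiom (2), where one uses non-negativity of both summands to conclude that each vanishes individually before invoking freeness — this is exactly why the definition bundles non-negativity together with the vanishing criterion.
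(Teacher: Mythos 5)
Your verification is correct and is exactly the routine axiom-by-axiom check that the paper intends when it leaves the proof to the reader; in particular you correctly handle the only point of substance, namely using non-negativity of both summands to deduce that each vanishes in the converse direction of axiom (2). Nothing further is needed.
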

\section{Triangle functions on $\CMS(A)$}
In this section $(A,\m)$ is a Gorenstein local ring of dimension $d \geq 1$ with residue field $k$. Let $\CMa(A)$ denote the full subcategory of maximal \CM \ $A$-modules and let $\CMS(A)$ denote the stable category of maximal \CM \ $A$-modules.
Let $\ICMS(A)$ denote the set of isomorphism classes  in $\CMS(A)$ and for an object $M \in \CMS(A)$ we denote its isomorphism class by $[M]$. In this section we prove Theorem  \ref{existence}. We also construct a large class of triangle functions on $\ICMS(A)$.

\s Let $M \in \CMa(A)$. By $M^*$ we mean the dual of $M$, i.e., $M^* = \Hom_A(M,A)$. Note $M \cong M^{**}$. \\
By $\Om^{-1}(M)$ we mean the \textit{co-syzygy} of $M$. Recall this is constructed as follows.
Let $F \rt G \xrightarrow{\epsilon} M^* \rt 0$ be a minimal presentation of $M^*$. Dualizing we get an exact sequence $0 \rt M \xrightarrow{\epsilon^*}  G^* \rt F^*$. 
Then $\Om^{-1}(M) = \coker \epsilon^*$. It can be easily shown that if $F^\prime \rt G^\prime \xrightarrow{\eta} M^* \rt 0$ is another  minimal presentation of $M^*$ then $\coker \epsilon^* \cong \coker \eta^*$.

\s\emph{Triangulated category structure on $\CMS(A)$}. \\
The reference for this topic is \cite[4.7]{Bu}. We first describe a 
basic exact triangle. Let $f \colon M \rt N$ be a morphism in $\CMa(A)$. Note we have an exact sequence $0 \rt M \xrightarrow{i} Q \rt \Om^{-1}(M) \rt 0$, with $Q$-free. Let $C(f)$ be the pushout of $f$ and $i$. Thus we have a commutative diagram with exact rows
\[
  \xymatrix
{
 0
 \ar@{->}[r]
  & M
\ar@{->}[r]^{i}
\ar@{->}[d]^{f}
 & Q
\ar@{->}[r]^{p}
\ar@{->}[d]
& \Om^{-1}(M)
\ar@{->}[r]
\ar@{->}[d]^{j}
&0
\\
 0
 \ar@{->}[r]
  &N    
\ar@{->}[r]^{i^\prime}
 & C(f)
\ar@{->}[r]^{p^\prime}
& \Om^{-1}(M)
    \ar@{->}[r]
    &0 
\
 }
\]
Here $j$ is the identity map on $\Om^{-1}(M)$.
As $N, \Om^{-1}(M) \in \CMa(A)$  it follows that $C(f) \in \CMa(A)$.
Then the projection of the sequence 
$$ M\xrightarrow{f} N \xrightarrow{i^\prime} C(f) \xrightarrow{-p^\prime} \Omega^{-1}(M)$$
in $\CMS(A)$  is a basic exact triangle. Exact triangles in $\CMS(A)$ are triangles isomorphic to a basic exact triangle.

\begin{remark}\label{ex-tr}
If $0 \rt M \xrightarrow{f} N \rightarrow L \rt 0$ is an exact sequence in $\CMa(A)$ then we have an exact triangle $M \rt N \rt L \rt \Om^{-1}(M)$ in $\CMS(A)$. To see this we do the basic construction with the map $f$. Then note that we have an exact sequence in $\CMa(A)$
\[
0 \rt Q \rt C(f) \rt L \rt 0.
\]
As $A$ is Gorenstein and $Q$ is free we get $C(f) \cong Q \oplus L$. It follows that 
$C(f) \cong L$ in $\CMS(A)$. The result follows.
\end{remark}
 The main result of this section is
 \begin{theorem}\label{pre}
 Let $\xi \colon \ICMa(A) \rt \ZZ$ be a pre-triangle function. Then $\xi$ induces a triangle function $\xi^\prime \colon \ICMS(A) \rt \ZZ$ defined as
 \[
 \xi^\prime([M]) = \xi(<M>).
 \]
 (Here by $<M>$ we mean isomorphism class of $M$ in $\CMa(A)$).
 \end{theorem}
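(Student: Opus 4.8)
The plan is to check, one axiom at a time, that $\xi^\prime$ satisfies the four defining properties of a triangle function on $\ICMS(A)$, in each case deducing the property from the corresponding property of the pre-triangle function $\xi$.

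First I would verify that $\xi^\prime$ is well defined, i.e. that $\xi(<M>)$ depends only on the stable isomorphism class $[M]$ in $\CMS(A)$. If $[M] = [N]$, there are finitely generated free $A$-modules $F,G$ with $M \oplus F \cong N \oplus G$ as $A$-modules; since a free module has $\xi$-value $0$ by axiom (2) for $\xi$, axiom (3) for $\xi$ gives $\xi(<M>) = \xi(<M \oplus F>) = \xi(<N \oplus G>) = \xi(<N>)$. Granting well-definedness, axiom (1) for $\xi^\prime$ is immediate from axiom (1) for $\xi$, axiom (3) for $\xi^\prime$ from axiom (3) for $\xi$, and axiom (2) for $\xi^\prime$ from axiom (2) for $\xi$ together with the fact that $M = 0$ in $\CMS(A)$ if and only if $M$ is free as an $A$-module.

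The content is axiom (4), sub-additivity along exact triangles, and here I would proceed by two reductions. By Remark \ref{rotation}(i), since every rotation of an exact triangle is exact, it is enough to establish inequality 4(b) for all exact triangles. Next, by definition every exact triangle in $\CMS(A)$ is isomorphic to a basic exact triangle $M \xrightarrow{f} N \xrightarrow{i^\prime} C(f) \xrightarrow{-p^\prime} \Om^{-1}(M)$ coming from the pushout construction; since $\xi^\prime$ is invariant under isomorphism in $\CMS(A)$, and an isomorphism of triangles induces isomorphisms between corresponding terms (using that the translation functor $\Om^{-1}$ is an equivalence), it suffices to verify 4(b) for a basic exact triangle. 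For such a triangle, 4(b) is precisely the assertion $\xi^\prime([C(f)]) \leq \xi^\prime([N]) + \xi^\prime([\Om^{-1}(M)])$, and the pushout construction supplies the short exact sequence $0 \rt N \rt C(f) \rt \Om^{-1}(M) \rt 0$ in $\CMa(A)$ (all three terms being maximal \CM, as already noted in the construction). Applying axiom (4) for the pre-triangle function $\xi$ to this sequence yields the desired inequality.

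I do not expect a genuine obstacle: once the two reductions for axiom (4) are in place — first collapsing 4(a), 4(b), 4(c) to 4(b) via rotation, then passing from an arbitrary exact triangle to a basic one using the isomorphism-invariance of $\xi^\prime$ — the surviving inequality 4(b) is literally the sub-additivity axiom of $\xi$ applied to the defining short exact sequence of the basic triangle. The only steps demanding a little care are the well-definedness argument and making explicit that an isomorphism of exact triangles identifies the relevant stable isomorphism classes of objects.
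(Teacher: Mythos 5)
Your proposal is correct and follows the paper's own proof essentially verbatim: well-definedness via $M\oplus F\cong N\oplus G$ and additivity plus vanishing on free modules, trivial verification of axioms (1)--(3), reduction of 4(a)--(c) to 4(b) by rotation, reduction to a basic exact triangle by isomorphism invariance, and finally the sub-additivity of $\xi$ applied to the short exact sequence $0\rt N\rt C(f)\rt \Om^{-1}(M)\rt 0$. No discrepancies to report.
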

 \begin{proof}
 We first show that $\xi^\prime $ is a well-defined function. Let $[M] = [N]$. Then there exists  free modules $F,G$ such that $M\oplus F \cong N \oplus G$. So $<M\oplus F> = < N \oplus G>$ in $\ICMa(A)$. Thus
 $\xi(<M\oplus F>) = \xi(< N \oplus G>)$. But $\xi$ is a pre-triangle function. So
 $$\xi(<M\oplus F>) = \xi(<M>) + \xi(<F>) = \xi(<M>).$$
 Similarly  $\xi(< N \oplus G>) = \xi(<N>)$. It follows that $\xi^\prime$ is a well-defined function. 
 
 Properties (1),(2),(3) are trivial to show and is left to the reader. We prove property (4). Let $M \rt N \rt L \rt \Om^{-1}(M)$ be an exact triangle in $\CMS(A)$. Then it is isomorphic to a basic triangle $M^\prime \xrightarrow{f} N^\prime \rt C(f) \rt \Om^{-1}(M)$. We have an exact sequence $0 \rt N^\prime \rt C(f) \rt \Om^{-1}(M^\prime) \rt 0$.
 As $\xi$ is a pre-triangle we have
 $$ \xi(<C(f)>) \  \leq \ \xi(<N^\prime>) + \xi(<\Om^{-1}(M^\prime)>).$$
 Note $C(f) \cong L$, $\Om^{-1}M \cong \Om^{-1}(M^\prime)$ and $N \cong N^\prime$ in $\CMS(A)$.  So we have
 $$ \xi^\prime([L]) \leq \xi^\prime([N]) + \xi^\prime([\Om^{-1}(M)]).$$
 Thus we have shown property 4(b) for all exact triangles. By \ref{rotation} it follows that property 4(a),(c) are also satisfied for all exact triangles. 
 \end{proof}
 
 We now give
 \begin{proof}[Proof of Theorem \ref{existence}]
 This follows from Theorem \ref{existence-C} and Theorem \ref{pre}.
 \end{proof}

\s\label{inf-tri} We now give construction of infinitely many triangle functions on $\CMS(A)$. Since we have one pre-triangle function on $\ICMa(A)$, we constructed in \ref{inf-pre},  \ref{pr-inf-pre} and \ref{add} infinitely many pre-triangle functions. Each of these will yield a triangle function on $\CMS(A)$.  

\section{Some preliminaries on Liason of Modules and \\ Maximal \CM \ approximation}
In this section we recall the definition of linkage of modules as given in \cite{MS}.  We also recall the notion of maximal \CM \ approximations and then breifly explain its connection with Liason theory. We also prove an easy result regarding maximal \CM \ approximations (which we suspect is already known but we are unable to find a reference). Throughout this section $A$ is a Gorenstein ring. Recall a Gorenstein ideal $\q$ in $A$ is a perfect ideal $\q$ with $A/\q$ a Gorenstein ring.

\s Let us recall the definition of transpose of a module. Let $F_1 \xrightarrow{\phi} F_0 \rt M \rt 0$ be a minimal presentation of $M$.  Let $(-)^* = \Hom(-,A)$. The \textit{transpose} $\Tr(M)$ is defined by the exact sequence
\[
0 \rt M^* \rt F_0^* \xrightarrow{\phi^*} F_1^* \rt \Tr(M) \rt 0.
\] 

\begin{definition}
Two $A$-modules $M$ and $N$ are said to be \textit{horizontally linked} if 
$M \cong \Om(\Tr(N))$ and $N \cong \Om(\Tr(M))$.
\end{definition}
Next we define linkage in general.
\begin{definition}
Two $A$-modules $M$ and $N$ are said to be linked via a Gorenstein ideal $\q$ if
\begin{enumerate}
\item
$\q \subseteq \ann M \cap \ann N$, and
\item
$M$ and $N$ are horizontally linked as $A/\q$-modules.
\end{enumerate}
We write it as $M \sim_\q N$.
\end{definition}

\begin{remark}
 It can be shown that ideals $I$ and $J$ are linked by a Gorenstein ideal $\q$ (definition as in the introduction) if and only if the  module $A/I$ is linked to $A/J$ by $\q$, see \cite[Proposition 1]{MS}. 
\end{remark}

\s We say $M, N$ are in  \emph{same linkage class} of modules  if there is a sequence of $A$-modules $M_0,\ldots, M_n$  and Gorenstein ideals $\q_0 \ldots,\q_{n-1}$ such that
\begin{enumerate}[\rm (i)]
\item
$M_j \sim_{\q_j} M_{j+1}$, for $j = 0,\ldots, n-1$.
\item
 $M_0 = M$ and $M_n =N$. 
\end{enumerate}
If $n$ is even then we say that $M$ and $N$ are \emph{evenly linked}.

\s\textit{(MCM-approximations)} An MCM approximation of a $A$-module $M$ is a short exact sequence
$ 0 \rt Y \rt X \rt M \rt 0$ where $X$ is maximal \CM \ and $\projdim Y < \infty$. If
$0 \rt Y^\prime \rt X^\prime \rt M \rt 0$ is another MCM approximation of $M$ then $X$ and $X^\prime$ are stably isomorphic, i.e., there exists  free modules $F,G$ with $X\oplus F \cong X^\prime \oplus G$. Thus we have a well-defined object $X_M$ in $\CMS(A)$. This in fact defines a functor but we do not need it here.

The relation between Liason theory and MCM approximation is the following result by
Martsinkovsky and Strooker \cite[Theorem 13]{MS}. For Cohen-Macaulay modules  of codimension $r >0$ this result was proved by Yoshino and Isogawa \cite[Corollary 1.6]{YI}.

\begin{theorem}
Let $(A,\m)$ be a Gorenstein local ring and let $M$ and $N$ be two  $A$-modules. If $M$ is evenly linked to $N$ then $X_M \cong X_N$ in $\CMS(A)$.
\end{theorem}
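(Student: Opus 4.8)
The last complete statement in the excerpt is the theorem of Martsinkovsky–Strooker (stated after the discussion of MCM-approximations): if $(A,\m)$ is a Gorenstein local ring and $M$ is evenly linked to $N$, then $X_M \cong X_N$ in $\CMS(A)$.

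The plan is to reduce to a single ``double link'' and then pass to the singularity category. Since $M$ and $N$ are evenly linked there is a chain $M = M_0 \sim_{\q_0} M_1 \sim_{\q_1} M_2 \sim \cdots \sim_{\q_{2t-1}} M_{2t} = N$, and since $\cong$ in $\CMS(A)$ is transitive it suffices to prove: if $M \sim_{\q_0} P$ and $P \sim_{\q_1} N$ then $X_M \cong X_N$. Next I would invoke Buchweitz's equivalence $\CMS(A) \simeq D_{\mathrm{sg}}(A) := D^{b}(\operatorname{mod} A)/\operatorname{Perf}(A)$ underlying the triangulated structure of \cite{Bu}; under the canonical functor $\operatorname{mod} A \to D_{\mathrm{sg}}(A)$ an $A$-module $Z$ is carried to (the image of) its maximal \CM \ approximation $X_Z$, because in an MCM approximation $0 \to Y \to X_Z \to Z \to 0$ the module $Y$ has finite projective dimension and hence vanishes in $D_{\mathrm{sg}}(A)$. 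So $X_M \cong X_N$ in $\CMS(A)$ if and only if $M \cong N$ in $D_{\mathrm{sg}}(A)$, and the task becomes: a double link does not change the class of a module in the singularity category.

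For a single link $M \sim_{\q} P$ I would work over $B := A/\q$, which is a Gorenstein ring because $\q$ is a Gorenstein ideal. By definition $M$ and $P$ are horizontally linked $B$-modules, i.e.\ $P \cong \Om_B\Tr_B M$ and $M \cong \Om_B\Tr_B P$ up to free $B$-summands. Over the Gorenstein ring $B$ the operation $\lambda_B := \Om_B\Tr_B$ is, after replacing a module by its $B$-\CM \ approximation, the $B$-dual up to a shift: for $Z \in \CMa(B)$ one computes $\Tr_B Z \cong \Om_B^{-2}(Z^{*_B})$, hence $\lambda_B(Z) \cong \Om_B^{-1}(Z^{*_B})$; a short computation with the stable identities $(\Om_B^{-1}W)^{*_B} \cong \Om_B(W^{*_B})$ and $W^{*_B *_B} \cong W$ then gives $\lambda_B^{2} \cong \operatorname{id}$ on $\CMS(B)$. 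Because $\q$ is a \emph{perfect} ideal, restriction of scalars along $A \to B$ sends perfect complexes to perfect complexes, so it descends to a functor $\rho_\q \colon D_{\mathrm{sg}}(B) \to D_{\mathrm{sg}}(A)$, and on a $B$-module $Z$ one has $\rho_\q(Z) = X^A_Z$ in $D_{\mathrm{sg}}(A)$. Applying $\rho_\q$ to $P \cong \lambda_B(M)$ yields $X^A_P \cong \rho_\q(\lambda_B(M))$.

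It remains to handle the two links, which use possibly different ideals $\q_0,\q_1$, and this is where I expect the real difficulty. The goal is to show that the assignment $Z \mapsto \rho_\q(\lambda_{A/\q}(Z))$ factors through $[X^A_Z] \in \CMS(A)$ by an operation $\Lambda \colon \CMS(A) \to \CMS(A)$ that does \emph{not} depend on $\q$ — one expects $\Lambda$ to be $\Om^{-1}_A \circ (-)^{*}$ on the relevant (reflexive, ``stable'') modules. Granting this, $X^A_P \cong \Lambda(X^A_M)$ and $X^A_N \cong \Lambda(X^A_P) \cong \Lambda^{2}(X^A_M)$; evaluating $\Lambda^{2}$ by using a single fixed Gorenstein ideal $\q$ together with the identity $\lambda_{A/\q}^{2} \cong \operatorname{id}$ forces $\Lambda^{2} = \operatorname{id}$, whence $X^A_N \cong X^A_M$ and $X_M \cong X_N$. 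The crux is precisely the change-of-rings step: one needs an explicit description of $X^A_Z$ for a $B$-module $Z$ — for instance $X^A_Z \cong \Om_A^{-g}$ of a module manufactured from $\Ext^{g}_A(Z,A)$, where $g = \grade \q$, using that $\q$ being a Gorenstein ideal forces $\Ext^{i}_A(B,A) = 0$ for $i \neq g$ and $\Ext^{g}_A(B,A) \cong B$ — together with a verification that $\rho_\q \circ \lambda_{A/\q}$ matches, through this description, the $\q$-independent operation, the grade shifts cancelling upon iteration. This bookkeeping is the technical heart of the statement and is exactly what is carried out in \cite[Theorem 13]{MS} (and, for \CM \ modules of positive codimension, in \cite[Corollary 1.6]{YI}), so in practice I would cite their computation rather than redo it.
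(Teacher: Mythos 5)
The paper offers no proof of this statement: it is quoted directly from Martsinkovsky and Strooker \cite[Theorem 13]{MS} (with \cite[Corollary 1.6]{YI} for Cohen--Macaulay modules of positive codimension). Your proposal sketches a plausible strategy --- transitivity, Buchweitz's identification of $\CMS(A)$ with the singularity category under which a module is sent to its MCM approximation, and the self-inverseness of the horizontal-linkage operator $\Om\Tr$ --- but, as you yourself acknowledge, the genuinely technical steps (the $\q$-independence of the double-link operation, and the fact that your computation of $\lambda_B^2 \cong \mathrm{id}$ is carried out only for maximal Cohen--Macaulay $B$-modules while horizontally linked modules need not be such) are deferred to the very same citation, so in substance both ``proofs'' reduce to the reference to \cite{MS}.
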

 
\s \label{codim-n}  If $M$ is \CM \ then maximal \CM \ approximation of $M$ are very easy to construct. We recall this construction from \cite[p.\ 7]{AB}.  Let $n = \codim M = \dim A - \dim M$.  Let $M^\vee = \Ext^n_A(M,A)$. It is well-known that $M^\vee$ is \CM \ module of codim $n$ and $M^{\vee \vee} \cong M$.
Let $\FF$ be any free resolution of $M^\vee$ with each $\FF_i$ a finitely
generated free module. Note $\FF$ need not be minimal free resolution of $M$. Set  $S_n(\FF) = \image( \FF_n \xrightarrow{\partial_n} \FF_{n-1})$. Then  note $S_n(\FF)$ is a maximal \CM \ $A$-module. It can be easily proved that $X_M \cong S_n(\FF)^*$ in $\CMS(A)$.

The following result should be well-known to the experts. We give a proof due to lack of a reference.
\begin{proposition}\label{exact-n}
Let $M,N,L$ be \CM \ $A$-modules with $\codim = n$.  Suppose we have an exact sequence $0 \rt M \rt N \rt L \rt 0$. Then we have an exact triangle
\[
X_M \rt X_N \rt X_L \rt  \Om^{-1}(X_M)
\]
in $\CMS(A)$.
\end{proposition}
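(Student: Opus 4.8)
The plan is to reduce everything to the functorial description of the MCM approximation given in \ref{codim-n}, namely $X_M \cong S_n(\FF)^*$ in $\CMS(A)$, where $\FF$ is \emph{any} free resolution of $M^\vee = \Ext^n_A(M,A)$ and $S_n(\FF) = \image(\FF_n \to \FF_{n-1})$. The key point is that the functor $(-)^\vee = \Ext^n_A(-,A)$ on the category of \CM \ modules of codimension $n$ is exact and contravariant, because for such modules $\Ext^i_A(-,A) = 0$ for $i \neq n$; applying the long exact sequence of $\Ext$ to $0 \to M \to N \to L \to 0$ therefore gives a short exact sequence $0 \to L^\vee \to N^\vee \to M^\vee \to 0$ of \CM \ modules of codimension $n$. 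So the problem is pushed back to producing the desired exact triangle from this dual short exact sequence at the level of the $S_n$-construction.

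First I would use the horseshoe lemma to choose compatible free resolutions: pick free resolutions $\GG \to L^\vee$ and $\HH \to M^\vee$, and build from them a free resolution $\FF \to N^\vee$ with $\FF_i = \GG_i \oplus \HH_i$ fitting into a short exact sequence of complexes $0 \to \GG \to \FF \to \HH \to 0$ that is degreewise split. Truncating at spot $n$, this yields a short exact sequence of maximal \CM \ modules
\[
0 \rt S_n(\GG) \rt S_n(\FF) \rt S_n(\HH) \rt 0,
\]
since each $\FF_i = \GG_i \oplus \HH_i$ and the boundary maps are triangular. Now dualize: because $S_n(\GG), S_n(\FF), S_n(\HH)$ are maximal \CM \ over the Gorenstein ring $A$, the functor $(-)^* = \Hom_A(-,A)$ is exact on them, so we get a short exact sequence $0 \to S_n(\HH)^* \to S_n(\FF)^* \to S_n(\GG)^* \to 0$ of maximal \CM \ modules. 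By \ref{codim-n} this is, up to stable isomorphism, exactly $0 \to X_M \to X_N \to X_L \to 0$ (note the two contravariant operations $(-)^\vee$ and $(-)^*$ compose to put $X_M$ back on the left). Finally, by Remark \ref{ex-tr}, any short exact sequence in $\CMa(A)$ induces an exact triangle in $\CMS(A)$, giving $X_M \rt X_N \rt X_L \rt \Om^{-1}(X_M)$.

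The main obstacle I expect is bookkeeping rather than any deep difficulty: one must check that the stable isomorphisms $X_M \cong S_n(\FF)^*$ from \ref{codim-n} are compatible with the maps in the short exact sequence, i.e. that replacing a resolution by the horseshoe resolution does not destroy the identifications. The cleanest way around this is to observe that $S_n(\FF)^*$ in $\CMS(A)$ is independent of the choice of free resolution $\FF$ of $M^\vee$ (which is implicit in \ref{codim-n}), and then simply make the single coherent choice of resolutions above, so that all three approximations are read off from one short exact sequence of complexes. One small point to verify is that the horseshoe construction can indeed be arranged degreewise split (it can, since the $\HH_i$ are free hence projective), which is what makes $S_n$ of the sequence exact. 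Everything else is the exactness of $\Ext^n_A(-,A)$ and $\Hom_A(-,A)$ on the relevant \CM \ categories, both standard over a Gorenstein ring.
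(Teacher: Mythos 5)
Your proposal is correct and follows essentially the same route as the paper: dualize via $\Ext^n_A(-,A)$ to reverse the sequence, apply the horseshoe lemma to get a short exact sequence of free resolutions, pass to the $n$-th syzygy modules $S_n(-)$, dualize the resulting sequence of maximal \CM\ modules, and invoke Remark \ref{ex-tr}. The extra care you take with the degreewise splitness of the horseshoe resolution and the independence of $S_n(\FF)^*$ from the choice of resolution is sound and only makes the argument more complete.
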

\begin{proof}
Dualizing
 we have an exact sequence $0 \rt L^\vee \rt N^\vee \rt M^\vee \rt 0$.
By a well-known theorem in homological algebra there exists a short-exact sequence of complexes $0 \rt \FF \rt \GG \rt \HH \rt 0$ where 
$\FF, \GG$  and $ \HH$ are free resolutions of $L^\vee, N^\vee$ and $M^\vee$ respectively. We use notation as in \ref{codim-n}. Note we have an exact sequence
\[
0 \rt S_n(\FF) \rt S_n(\GG) \rt S_n(\HH) \rt 0.
\]
As each of the modules in the above short exact sequence is maximal \CM \ we get an exact sequence 
\[
0 \rt S_n(\HH)^* \rt S_n(\GG)^* \rt S_n(\FF)^* \rt 0.
\]
The result now follows from \ref{ex-tr} and \ref{codim-n}.
\end{proof}

\section{Proof of Theorem \ref{result-1}}
In this section $(A,\m)$ is a Gorenstein local ring.  First we prove that for one dimensional rings the set of even liason classes of $\{\m^n \mid n \geq 1 \}$ is a finite set. 

\begin{proposition}\label{1-dim}
Let $(A,\m)$ be a one-dimensional Gorenstein ring.  Then there exists $s \geq 1$ such that $\m^{sn + r}$ is evenly linked to $\m^{s(n-1) + r}$ for all $n \gg 0$ and $r = 0,1,\ldots,s-1$.
\end{proposition}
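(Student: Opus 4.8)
The plan is to reduce everything to the one-dimensional theory of the Hilbert–Samuel filtration and the relation between linkage of ideals and the stable category, exactly as in Application I of the introduction. First I would pass to the case where the residue field $k$ is infinite: if $k$ is finite, replace $A$ by $A[t]_{\m A[t]}$, which is faithfully flat over $A$, one-dimensional Gorenstein, has infinite residue field, and preserves all the relevant length and linkage data (linkage of ideals is compatible with this base change since the Gorenstein-linking ideal $\q$ stays Gorenstein and the colon relations are preserved by flat base change). So assume $k$ infinite and pick a superficial element $x \in \m$ for $A$; since $\dim A = 1$ and $\depth A = 1$ (Gorenstein), $x$ is $A$-regular and $\m^{n+1} = x\m^n$ for all $n \gg 0$. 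Set $s$ to be an integer large enough that $\m^{n+1} = x\m^n$ holds for all $n \geq s$ (and $s \geq 1$). Actually, one wants a reduction number here: there exists $s_0$ with $\m^{n+1} = x \m^n$ for $n \geq s_0$, and I will take $s = s_0$ (or any convenient multiple).

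Next I would exhibit the even linkage directly. The key point is the result cited from \cite[Theorem 3.6]{KM} together with \cite[Proposition 1]{MS}: in a Gorenstein local ring one can, by the structure of the Hilbert filtration, link $\m^a$ to $\m^b$ through an explicit Gorenstein (indeed complete intersection, here principal) ideal whenever $\m^a = x^{a-b}\m^b$ for a regular element $x$, and such a single link is an odd link; composing two of them gives an even link. Concretely, for $n$ large and $r \in \{0,\dots,s-1\}$ we have $\m^{sn+r} = x^{s}\m^{s(n-1)+r}$ once $s(n-1)+r \geq s_0$, i.e. for $n \gg 0$. The ideal $(x^s)$ (or a suitable principal/complete-intersection ideal adapted to $\m^{s(n-1)+r}$) then links $\m^{sn+r}$ to an intermediate ideal, and a second principal link returns to $\m^{s(n-1)+r}$; alternatively one invokes the KM-linking statement twice. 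Either way $\m^{sn+r} \sim \cdot \sim \m^{s(n-1)+r}$ is an even link for all $n \gg 0$ and all $r = 0,1,\dots,s-1$. I would write this out using the colon-ideal description: if $J = (x^s)\colon \m^{s(n-1)+r}$ then $\m^{sn+r} \subseteq (x^s)\colon J$ and in fact equality holds because $A/\m^{s(n-1)+r}$ is Cohen–Macaulay (it is Artinian) and $(x^s)$ is generated by a regular sequence of length $1 = \codim$, so double-linking through a complete intersection returns the original ideal — this is classical codimension-one/Gorenstein-Artinian linkage.

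The main obstacle I anticipate is bookkeeping the indices so that a single value of $s$ works uniformly in $r$: one needs $s$ chosen so that $\m^{sn+r} = x^{s}\m^{s(n-1)+r}$ holds simultaneously for every residue $r \in \{0,\dots,s-1\}$ and all sufficiently large $n$, which just amounts to taking $s$ at least the reduction number $s_0$ and then noting $s(n-1)+r \geq s(n-1) \to \infty$. A secondary subtlety is making sure each individual link is genuinely through a \emph{Gorenstein} ideal (principal, generated by a regular element, hence a complete intersection, hence Gorenstein) and that the colon operation really is an involution on these Artinian quotients — this is where I would cite \cite[Proposition 1]{MS} to translate module linkage back to ideal linkage, and \cite[Theorem 3.6]{KM} for the explicit filtration-theoretic linking statement, rather than redoing the Artinian Gorenstein duality by hand. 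Everything else is routine.
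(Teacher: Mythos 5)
Your overall strategy --- linking consecutive powers of $\m$ by an even chain of complete-intersection links through principal ideals, using a reduction $\m^{N+s}=a\m^{N}$ for $N\gg 0$ --- is in the same spirit as the paper's proof, but two steps are genuinely broken. First, the reduction to an infinite residue field via $A\rightarrow A[t]_{\m A[t]}$ goes the wrong way: flat base change shows that links in $A$ extend to links upstairs (this is the content of Lemma \ref{flat}), but you need the \emph{converse}, namely that an even linkage of $\m^{sn+r}A[t]_{\m A[t]}$ and $\m^{s(n-1)+r}A[t]_{\m A[t]}$ descends to one of $\m^{sn+r}$ and $\m^{s(n-1)+r}$ in $A$; the linking Gorenstein ideals upstairs need not be extended from $A$, so no descent is available. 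The paper avoids this entirely by choosing $a\in\m^{s}\setminus\m^{s+1}$ whose initial form is a homogeneous parameter of degree $s$ for the associated graded ring $G$; such an $a$ exists over a finite residue field precisely because one may take $s>1$, and this is the actual role of $s$ in the statement. Your $s$ is a reduction number, a different (and, in the infinite-field case, superfluous) device.

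Second, your explicit linking data is wrong: $(x^{s})$ is not contained in $\m^{sn+r}$ or in $\m^{s(n-1)+r}$ once $n\geq 2$, so it cannot serve as the linking complete intersection; moreover the displayed computation $J=(x^{s})\colon\m^{s(n-1)+r}$ followed by $\m^{sn+r}=(x^{s})\colon J$ would, by involutivity of CI-linkage on the Artinian quotient, force $\m^{sn+r}=\m^{s(n-1)+r}$, which is absurd. The linking ideals must grow with $n$, and the two powers of $\m$ must be linked to a \emph{common} intermediate ideal by \emph{two different} complete intersections. This is the crux of the paper's argument: with $a$ as above one has $\m^{sn-r}=a\,\m^{s(n-1)-r}$ for $n\gg 0$ (the paper indexes by $-r$, which is the same family of ideals), hence
\[
(a^{n}\colon\m^{sn-r})=(a^{n}\colon a\,\m^{s(n-1)-r})=(a^{n-1}\colon\m^{s(n-1)-r}),
\]
and since $a^{n}\in\m^{sn}\subseteq\m^{sn-r}$ and $a^{n-1}\in\m^{s(n-1)-r}$, the principal complete intersections $(a^{n})$ and $(a^{n-1})$ link $\m^{sn-r}$ and $\m^{s(n-1)-r}$ respectively to the same ideal, giving the even link. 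Your proposal never produces this common intermediate ideal, which is the missing idea.
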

\begin{proof}
Let $a \in \m^s \setminus \m^{s+1}$ be such that image of $a$ in $\m^s/\m^{s+1}$ is a parameter for the associated graded ring $G  = \bigoplus_{n\geq 0} \m^n/\m^{n+1}$. Then it can be shown that $a$ is a non-zero divisor of  $A$ and $(\m^{n+s} \colon a) = \m^n$ for all $n \gg 0$. We also have that $\m^{n+s} = a\m^{n}$ for all $n \gg 0$.

It is easily verified that for all $n \gg 0$ we have $(a^n \colon \m^{sn-r}) = (a^{n-1} \colon \m^{s(n-1) - r}) $ for $r = 0,1,\ldots,s-1$. Therefore $\m^{sn -r}$ is evenly linked to $\m^{s(n-1) -r}$ for $r = 0,1,\ldots, s-1$ and for all $n \gg 0$. 
\end{proof}
\begin{remark}
If the residue field of $A$ is infinite then note we can choose 
$s = 1$ in the above Proposition \cite[1.5.12]{BH}. So we get $\m^n$ is evenly linked to $\m^{n-1}$ for all $n \gg 0$.
\end{remark}
\s By  \cite[Theorem 3.6]{KM}  it follows that  if $K$ is a field 
and $R = K[[X_1,\ldots,X_n]]$ then $\n^i $ is evenly linked to $\n^{i-1}$ for all $i \geq 2$; here $\n$ is the maximal ideal of $R$. We do not know whether in general for a regular local ring $(R,\n)$ with $\dim R \geq 3$ we have $\n^{i}$ is evenly linked to $\n^{i-1}$. We also 
do not know whether the set of even liason classes of $\{\n^i \mid i \geq 1 \}$ is a finite set. 

\s Let $M$ be an   $A$-module of dimension $r$. The function 
$$H(M,n) = \ell(\m^n M/\m^{n+1} M) \quad n  \geq 0,$$ is called the \emph{Hilbert function} of $M$. It is well-known that it is of polynomial type of degree $r -1$. In particular if $r \geq 2$ then $H(M,n) \rt \infty $ as $n\rt \infty$.

We now give:
\begin{proof}[Proof of Theorem \ref{result-1}]
For $n \geq 0$ we have an exact sequence of finite length $A$-modules
\[
0 \rt \frac{\m^nM}{\m^{n+1}M} \rt \frac{M}{\m^{n+1}M} \rt \frac{M}{\m^{n}M}  \rt 0.
\]
For $n \geq 0$,
let $X_n, Y_n$ denote the maximal \CM \ approximations of $\m^nM/\m^{n+1}M$ and $M/\m^{n+1}M$ respectively. Note $X_n \cong X_k^{H(M,n)}$ in $\CMS(A)$.
By \ref{exact-n}, for all $n \geq 1$ we have an exact triangle in $\CMS(A)$
\begin{equation}\label{ex-n}
X_n \rt Y_n \rt Y_{n-1} \rt \Omega^{-1}(X_n).
\end{equation}

Suppose if possible $\Lambda_M \subseteq \bigcup_{i=1}^{m} L_i$ for some finitely many even liason classes $L_1,\ldots, L_n$. Choose $V_i \in L_i$ for $i = 1,\ldots,m$. Then
for all $n \geq 0$ we have $Y_n \cong X_{V_i}$ in $\CMS(A)$ for some $i$ (depending on $n$). Notice we also have 
$\Om^{-1}(Y_n) \cong \Om^{-1}(X_{V_i})$ in $\CMS(A)$.

Let $\xi$ be any triangle function on $\ICMS(A)$. Then by \ref{ex-n} we have
\begin{equation}\label{ex-xi-n}
\xi([\Om^{-1}(X_n)]) \leq \xi([Y_{n-1}]) + \xi([\Om^{-1}(Y_n)]).
\end{equation}
Let 
\begin{align*}
\alpha &= \max \{ \xi([X_{V_i}]) \mid i = 1,\ldots, m \}, \\
\beta &= \max \{ \xi([\Om^{-1}(X_{V_i})]) \mid i = 1,\ldots, m \}. 
\end{align*}
Also note that 
$$\Om^{-1}(X_n) = (\Om^{-1}X_k)^{H(M,n)} \quad \text{in} \ \CMS(A).$$
By \ref{ex-xi-n} we have
\[
H(M,n)\xi([\Om^{-1}X_k]) \leq \alpha + \beta.
\]
Since $\dim M \geq 2$ we have that $H(M,n) \rt \infty $ as $n \rt \infty$. It follows that $\xi([\Om^{-1}X_k]) = 0$. Therefore $\Om^{-1}(X_k)$ is free. It follows that $X_k$ is free. Therefore $\projdim k < \infty$. This implies that $A$ is regular.
\end{proof}

\section{Proof of Theorem \ref{result-2}}
The following result  follows easily  from \cite[Theorem 3.6]{KM}. However we give a proof as we do not have a reference. It also explains the significance of Theorem \ref{result-2}.
\begin{proposition}\label{reg-equi}
Let $(A,\m)$ be a complete equi-characteristic Gorenstein local ring.  Let $I$ be an ideal generated by a regular sequence. The $I^n$ is evenly linked to $I^{n-1}$ for all $n \geq 2$.
\end{proposition}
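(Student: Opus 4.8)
The plan is to reduce the statement to \cite[Theorem 3.6]{KM}, which produces explicit (Gorenstein) links between consecutive powers of a complete intersection; this is exactly the input used in the Introduction to handle the powers of the maximal ideal, so here only a routine translation is required. The role of the hypotheses is that ``complete equi-characteristic'' lets one apply \cite{KM} after invoking Cohen's structure theorem, while ``$I$ generated by a regular sequence'' is what forces the powers $I^n$ to behave like powers of a maximal ideal.

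First I would record the structural facts that put us squarely in the realm of ordinary liaison. Since $I=(x_1,\ldots,x_r)$ is generated by an $A$-regular sequence, the associated graded ring $\operatorname{gr}_I(A)$ is a polynomial ring over $A/I$; in particular $\operatorname{gr}_I(A)$ is \CM, and a short induction on $n$ using the exact sequences $0 \rt I^n/I^{n+1} \rt A/I^{n+1} \rt A/I^n \rt 0$ (whose left-hand terms are free $A/I$-modules) shows that every $A/I^n$ is \CM \ of codimension exactly $r = \codim I$. Hence each $I^n$ is an unmixed height-$r$ \CM \ ideal, and by \cite[Proposition 1]{MS} linkage of the ideals $I^n$ coincides with linkage of the cyclic modules $A/I^n$. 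Since $A$ is complete and equicharacteristic, Cohen's structure theorem supplies a coefficient field $k \sub A$, and $x_1,\ldots,x_r$ generate a formal power series subring $k[[x_1,\ldots,x_r]] \sub A$; this is the setting in which \cite[Theorem 3.6]{KM} applies.

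Next I would invoke \cite[Theorem 3.6]{KM} to obtain, for each $n \geq 2$, a chain of Gorenstein links of even length joining $I^n$ to $I^{n-1}$ --- for instance by exhibiting an ideal $J_n$ that is directly linked both to $I^n$ and to $I^{n-1}$, so that $I^n \sim J_n \sim I^{n-1}$ has length two. Concretely, when $r=2$ one links $I^n$ to $I^{n-1}$ through the complete intersection $(x_1^n,x_2^n) \sub I^n$, for which a short colon-ideal computation gives $(x_1^n,x_2^n) : I^n = I^{n-1}$, and one then arranges the even count with a second such step; for $r \geq 3$ the intermediate ideals of \cite{KM} are genuinely Gorenstein rather than complete intersections, which is why the construction there is less elementary. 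That the intermediate ideals are the asserted powers is verified by an associativity-of-multiplicity argument: passing modulo a maximal $A/I$-regular sequence of parameters reduces each link to the finite-length identity $\ell(A/\mathfrak{c}) = \ell(A/I^n) + \ell(A/I^{n-1})$, and the relevant colengths are read off from $\operatorname{gr}_I(A) = (A/I)[T_1,\ldots,T_r]$. Counting the links in the resulting chain shows it has even length, so $I^n$ is evenly linked to $I^{n-1}$ for every $n \geq 2$, as claimed.

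I expect the main obstacle to be the identification of the residual ideal in each link, i.e. checking that the appropriate colon ideal equals $I^{n-1}$ (through a complete intersection when $r=2$, through a Gorenstein ideal when $r \geq 3$); for $r \geq 3$ this is no longer a monomial computation and is precisely the content that I would delegate to \cite[Theorem 3.6]{KM}. A secondary subtlety is bookkeeping: the obvious direct link $I^n \sim I^{n-1}$ is a single, hence odd, link, so the evenness in the statement must be produced by an even-length chain --- e.g. by routing through a common ideal $J_n$ --- rather than by that one step.
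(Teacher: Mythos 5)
Your overall strategy---reduce everything to \cite[Theorem 3.6]{KM}---is the same as the paper's, but the proposal is missing the one step that constitutes essentially the entire content of the paper's proof: the mechanism for transporting links from a regular subring into $A$. Theorem 3.6 of \cite{KM} (as used in this paper) produces even links between $\n^i$ and $\n^{i-1}$ where $\n$ is the maximal ideal of a power series ring; it does not directly produce links between $I^n$ and $I^{n-1}$ inside the Gorenstein ring $A$. You write that the subring $k[[x_1,\ldots,x_r]]\subseteq A$ ``is the setting in which \cite[Theorem 3.6]{KM} applies,'' but the links that theorem gives live in the subring, and you never explain why a Gorenstein link $\n^i\sim_\q \n^{i-1}$ downstairs yields $I^n\sim_{\q A} I^{n-1}$ upstairs. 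Concretely, one must check that $\q A$ is still a perfect ideal with $A/\q A$ Gorenstein, and that the colon identities $(\q:\n^i)A=(\q A:I^n)$ persist; both require (faithful) flatness of the extension. This is exactly Lemma \ref{flat} of the paper, which is then applied to $B=K[[x_1,\ldots,x_d]]\hookrightarrow A$ (a full system of parameters), where flatness is free of charge because $A$ is Cohen--Macaulay and module-finite over the regular ring $B$, hence free. Your extension $k[[x_1,\ldots,x_r]]\subseteq A$ is in fact also flat (by the local flatness criterion, since $x_1,\ldots,x_r$ is $A$-regular), so your shortcut is repairable, but neither the flatness assertion nor the base-change lemma appears in your argument, and without them the proof does not go through.

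Your fallback of computing the links directly in $A$ does not close this gap: the colon computation is only gestured at for $r=2$ (and even there yields a single, odd, direct link, with the even two-step chain again delegated to \cite{KM}), while for $r\geq 3$ you explicitly defer the identification of the residual ideals to \cite[Theorem 3.6]{KM}---which, as above, is a statement in the regular setting and cannot be invoked inside $A$ without the transfer step. The preliminary observations (that $A/I^n$ is \CM \ of codimension $r$, and that ideal linkage agrees with module linkage) are correct but are not what is needed here.
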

To prove this result we need the following general result.
\begin{lemma}\label{flat}
Let $\phi \colon (A,\m) \rt (B,\n)$ be a faithfully flat homomorphism of Gorenstein local rings. 
Let $I,J$ be ideals in $A$ and let $\q$ be a Gorenstein ideal in $A$ such that $I \sim_\q J$. Then
\begin{enumerate}[\rm (1)]
\item
$\q B$ is a Gorenstein ideal in $B$.
\item
$IB \sim_{\q B} JB$.
\end{enumerate} 
\end{lemma}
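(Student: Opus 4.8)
The statement to be proved is Lemma~\ref{flat}: a faithfully flat homomorphism $\phi \colon (A,\m) \rt (B,\n)$ of Gorenstein local rings transports a linkage $I \sim_\q J$ to a linkage $IB \sim_{\q B} JB$. The plan is to verify the two assertions separately, in the order stated.

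\emph{Part (1): $\q B$ is a Gorenstein ideal.} First I would check perfection. Since $\q$ is perfect, $\projdim_A(A/\q) = \grade(\q) = c$ for some $c$. As $\phi$ is flat, a finite free resolution $\F$ of $A/\q$ over $A$ of length $c$ becomes a finite free resolution $\F \otimes_A B$ of $(A/\q)\otimes_A B = B/\q B$ over $B$, so $\projdim_B(B/\q B) \leq c$. For the reverse inequality one uses that faithful flatness preserves grade: $\grade_B(\q B) = \grade_A(\q)$ (the Koszul complex or an $A$-regular sequence in $\q$ stays regular on $B$ by flatness, and is maximal such because $B/\q B \otimes_B k(B)$-type length arguments, or simply because $\operatorname{Ext}^i_A(A/\q,A)\otimes_A B \cong \operatorname{Ext}^i_B(B/\q B, B)$ by flatness, detects the grade). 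Hence $\projdim_B(B/\q B) = \grade_B(\q B) = c$ and $\q B$ is perfect. Next, $A/\q \rt B/\q B$ is again faithfully flat, both are to be shown Gorenstein; $A/\q$ is Gorenstein by hypothesis, and since the fiber rings of a flat local homomorphism between the quotients are the same as the fiber rings of $\phi$ modulo $\q$, one invokes the standard ascent result: if $A/\q \to B/\q B$ is flat local with $A/\q$ Gorenstein and all closed fibers Gorenstein, then $B/\q B$ is Gorenstein. Here the closed fiber of $\phi$ is Gorenstein because $B$ is Gorenstein and $A$ is Gorenstein (a flat local homomorphism has Gorenstein closed fiber when source and target are Gorenstein), and the closed fiber is unchanged upon passing to the quotients by $\q$. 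This gives (1).

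\emph{Part (2): $IB \sim_{\q B} JB$.} Condition (a) of the definition of linkage, $\q \subseteq I \cap J$, extends trivially: $\q B \subseteq IB \cap JB$ since flatness gives $(I\cap J)B = IB \cap JB$. For condition (b) I need $(\q B : JB) = IB$ and symmetrically. The key point is the compatibility of colon ideals with flat base change: for finitely generated ideals (which all ideals here are, as $A$ is Noetherian), flatness of $\phi$ gives $(\q : J)B = (\q B : JB)$. Applying this with the hypothesis $I = (\q : J)$ yields $IB = (\q : J)B = (\q B : JB)$, and symmetrically $JB = (\q B : IB)$. Hence $IB$ and $JB$ are linked via $\q B$, which by Part (1) is a Gorenstein ideal.

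\textbf{Main obstacle.} The routine parts are the colon-ideal and intersection identities under flat base change and the ascent of perfection. The step requiring the most care is the Gorenstein ascent in Part~(1): one must correctly set up the flat local homomorphism $A/\q \to B/\q B$ and identify its closed fiber with that of $\phi$ (so that the Gorenstein property of the fiber, needed to apply the ascent theorem for the Gorenstein property along flat local maps, is available). Once the fiber is correctly identified this is a citation to a standard result (e.g.\ in Matsumura or Bruns--Herzog), so the obstacle is bookkeeping rather than genuine difficulty; I would state it cleanly and cite accordingly.
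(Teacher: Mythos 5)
Your proposal is correct and follows essentially the same route as the paper: finite free resolutions base-change along the flat map to control $\projdim_B(B/\q B)$, the Gorenstein property of $B/\q B$ ascends via the induced flat local map $A/\q \to B/\q B$ whose closed fiber coincides with that of $\phi$ (Matsumura, Theorem 23.4), and the colon-ideal identities transfer by flat base change (Matsumura, Theorem 7.4). Your treatment is in fact slightly more careful than the paper's on two points — you explicitly verify perfection ($\projdim = \grade$) rather than only finiteness of projective dimension, and you check the containment $\q B \subseteq IB \cap JB$ — but the argument is the same.
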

\begin{proof}
(1) Tensoring a minimal free $A$-resolution of $A/\q$ with we get a minimal free $B$-resolution of $B/\q B$. Thus $\projdim_B B/\q B$ is finite.

Let $S = B/\m B$ be the fiber ring of $\phi$. Then as $B$ is Gorenstein we have that $S$ is a Gorenstein ring as well, see \cite[Theorem 23.4]{Mat}. 

Note the induced map $\ov{\phi} \colon A/\q \rt B/\q B$ is also flat with fiber ring $S$. As $A/\q$ and $S$ are Gorenstein rings we have that $B/\q B$ is also Gorenstein, see \cite[Theorem 23.4]{Mat}.  Thus $\q B$ is a Gorenstein ideal.

(2) As $I \sim_\q J$ we have $(\q \colon I) = J$ and $(\q \colon J) = I$. As $\phi$ is flat we have 
\[
JB = (\q \colon I)B = (\q B \colon I B);
\]
see \cite[Theorem 7.4]{Mat}. Similarly $IB = (\q B \colon J B)$. Therefore $IB \sim_{\q B} JB$.
\end{proof}
As an easy consequence we have
\begin{corollary}\label{reg-reg}
Let $K$ be a field. Let $R = K[[X_1,\ldots,X_n]]$. Fix $r \geq 1$. Set $I = (X_1,\ldots,X_r)$. Then $I^n$ is evenly linked to $I^{n-1}$ for $n \geq 2$.
\end{corollary}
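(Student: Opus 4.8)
The plan is to reduce the statement to the case of the maximal ideal of a power series ring in $r$ variables and then push the even linkage across a faithfully flat extension using Lemma~\ref{flat}. Set $S = K[[X_1,\ldots,X_r]]$, a complete regular (hence Gorenstein) local ring with maximal ideal $\mathfrak{n}_S = (X_1,\ldots,X_r)S$, and write $R = K[[X_1,\ldots,X_n]] = S[[X_{r+1},\ldots,X_n]]$. Since $S$ is Noetherian, $R$ is flat over $S$, and because $\mathfrak{n}_S R \subseteq \m_R$ is a proper ideal the inclusion $S \hookrightarrow R$ is faithfully flat; moreover both $S$ and $R$ are Gorenstein local, so Lemma~\ref{flat} applies to this extension. (When $r = n$ we have $S = R$ and the assertion is just the cited result below.)

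By \cite[Theorem 3.6]{KM}, for each $i \geq 2$ the ideal $\mathfrak{n}_S^{\,i}$ is evenly linked to $\mathfrak{n}_S^{\,i-1}$ inside $S$. Unravelling the definition of even linkage, there is a chain of ideals $\mathfrak{n}_S^{\,i} = J_0, J_1, \ldots, J_{2t} = \mathfrak{n}_S^{\,i-1}$ of $S$ and Gorenstein ideals $\q_0,\ldots,\q_{2t-1}$ of $S$ with $J_j \sim_{\q_j} J_{j+1}$ for $j = 0,\ldots,2t-1$. Applying Lemma~\ref{flat}(1) and (2) to each individual link, each $\q_j R$ is a Gorenstein ideal of $R$ and $J_j R \sim_{\q_j R} J_{j+1} R$. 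Hence $\mathfrak{n}_S^{\,i} R = J_0 R, J_1 R, \ldots, J_{2t} R = \mathfrak{n}_S^{\,i-1} R$ is a chain of links of even length $2t$, so $\mathfrak{n}_S^{\,i} R$ is evenly linked to $\mathfrak{n}_S^{\,i-1} R$ in $R$. Since $\mathfrak{n}_S R = (X_1,\ldots,X_r)R = I$, we have $\mathfrak{n}_S^{\,i} R = I^i$ and $\mathfrak{n}_S^{\,i-1} R = I^{i-1}$; taking $i = n$ gives the claim for all $n \geq 2$.

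There is essentially no obstacle here: the substantive work has already been done in Lemma~\ref{flat} and in \cite{KM}. The only thing to be careful about is that Lemma~\ref{flat} is phrased for a single link, so one must genuinely pass through the definition of even linkage and apply it term by term, noting that the parity of the chain is preserved because each link is carried to exactly one link. Faithful flatness of $S \hookrightarrow R$ is the standard fact that a formal power series ring over a Noetherian ring is faithfully flat, which can be cited from a standard text if desired.
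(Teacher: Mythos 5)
Your proof is correct and follows exactly the paper's route: reduce to the maximal ideal of $T=K[[X_1,\ldots,X_r]]$ via \cite[Theorem 3.6]{KM} and transport the even linkage along the faithfully flat inclusion $T\hookrightarrow R$ using Lemma \ref{flat}. The only difference is that you spell out the link-by-link application of Lemma \ref{flat} to the chain, which the paper leaves implicit.
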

\begin{proof}
Let $T = K[[X_1,\ldots,X_r]]$ and let $\m = (X_1,\ldots,X_r)$. The inclusion $T \rt R$ is flat. By \cite[Theorem 3.6]{KM},  $\m^n$ is evenly linked to $\m^{n-1}$ for $n \geq 2$. By \ref{flat} we have that  $I^n$ is evenly linked to $I^{n-1}$ for $n \geq 2$.
\end{proof}
We now give
\begin{proof}[Proof of Proposition \ref{reg-equi}]
Let $I = (x_1,\ldots,x_r)$. Extend this regular sequence to a system of parameters
$x_1,\ldots,x_d$ of $A$. Assume $A = K[[Y_1,\ldots,Y_m]]/I$. Consider the subring
$B = K[[x_1,\ldots,x_d]]$ of $A$. Then note that
\begin{enumerate}
\item
$A$ is finitely generated as a $B$-module.
\item
$B \cong K[[X_1,\ldots,X_d]]$ the power series ring over $K$ in $d$-variables.
\item
As $A$ is \CM \ we have that $A$ is free as a $B$-module. Thus the inclusion $i \colon A \rt B$ is flat.
\end{enumerate}
By Corollary \ref{reg-reg} we have that the $B$-ideal $J = (x_1,\ldots,x_r)$ has the property that $J^n$ is evenly linked to $J^{n-1}$ for all $n \geq 2$. By \ref{flat} it follows that $I^n$ is evenly linked to $I^{n-1}$ for all $n \geq 2$.
\end{proof}
\begin{remark}
(with hypotheses as in \ref{reg-equi}). Note that as modules, $A/I^n$ is evenly linked to $A/I^{n-1}$ for all $n \geq 2$. It follows that if $F$ is a finitely generated free $A$-module then $F/I^nF$ is evenly linked to $F/I^{n-1}F$ for all $n \geq 2$.
\end{remark}
We now give
\begin{proof}[Proof of Theorem \ref{result-2}]
As $M$ is a maximal \CM \ $A$-module it follows that $x_1,\ldots,x_r$ is an $M$-regular sequence.
Note that $I^nM/I^{n+1}M \cong (M/IM)^{\gamma_n}$ where $\gamma_n = \binom{n+r-1}{r-1}$, see \cite[Theorem 1.1.8]{BH}.
For all $n \geq 0$ we also have an exact sequence 
\begin{equation}\label{r2-eq1}
0 \rt \frac{I^nM}{I^{n+1}M} \rt \frac{M}{I^{n+1}M} \rt \frac{M}{I^{n}M} \rt 0.
\end{equation}
Inductively one can prove that $M/I^nM$ is a \CM \ $A$-module of codimension $r$.
Thus \ref{r2-eq1} is an exact sequence of codimension $r$ \CM \ $A$-modules.
For $n \geq 0$ let $X_n, Y_n$ denote maximal \CM \ approximations of $I^nM/I^{n+1}M$ and $M/I^{n+1}M$ respectively. Therefore by \ref{exact-n} for all $n \geq 1$ we have the following exact triangle in $\CMS(A)$
\begin{equation}\label{r2-e2}
X_n \rt Y_n \rt Y_{n-1} \rt \Om^{-1}(X_n).
\end{equation} 
Suppose if possible $\Lambda_M^I \subseteq \bigcup_{i=1}^{m} L_i$ for some finitely many even liason classes $L_1,\ldots, L_n$. Choose $V_i \in L_i$ for $i = 1,\ldots,m$. Then
for all $n \geq 0$ we have $Y_n \cong X_{V_i}$ in $\CMS(A)$ for some $i$ (depending on $n$). Notice we also have 
$\Om^{-1}(Y_n) \cong \Om^{-1}(X_{V_i})$ in $\CMS(A)$.

Let $\xi$ be any triangle function on $\ICMS(A)$. Then by \ref{ex-n} we have
\begin{equation}\label{ex-xi-n2}
\xi([\Om^{-1}(X_n)]) \leq \xi([Y_{n-1}]) + \xi([\Om^{-1}(Y_n)]).
\end{equation}
Let 
\begin{align*}
\alpha &= \max \{ \xi([X_{V_i}]) \mid i = 1,\ldots, m \}, \\
\beta &= \max \{ \xi([\Om^{-1}(X_{V_i})]) \mid i = 1,\ldots, m \}. 
\end{align*}
Also note that 
$$\Om^{-1}(X_n) = (\Om^{-1}X_{M/IM})^{\gamma_n} \quad \text{in} \ \CMS(A).$$
By \ref{ex-xi-n2} we have
\[
\gamma_n\xi([\Om^{-1}X_{M/IM}]) \leq \alpha + \beta.
\]
Since $r \geq 2$ we have that $\gamma_n \rt \infty $ as $n \rt \infty$. It follows that $\xi([\Om^{-1}X_{M/IM}]) = 0$. Therefore $\Om^{-1}(X_{M/IM})$ is free. It follows that $X_{M/IM}$ is free. Therefore $\projdim_A M/IM < \infty$. As $x_1,\ldots,x_r$ is an $M$-regular sequence it follows that $\projdim_A M$ is finite.  So $M$ is free.
\end{proof}

\section{Proof of Theorem \ref{result-3}}
Let $r \geq 1$. Let $\CMr(A)$ denote the full sub-category of \CM \ $A$-modules of codimension $r$. In this section we define an invariant of modules in $\CMr(A)$ and then use it to prove Theorem \ref{result-3}. 

\begin{definition}
Let $N \in \CMr(A)$. Let $X_N$ be a maximal \CM \ approximation of $N$. Set 
$\theta_A(N) = e^T_A([X_N])$. 
\end{definition}
As $e^T_A(-)$ is a triangle function on $\CMS(A)$ it follows that $\theta_A(N)$ is a well-defined invariant of $M$.

The number $\theta_A(-)$ behaves well mod superficial sequences. Let us recall the notion of a superficial sequence. Let $N$ be an $A$-module of dimension $r$.
 We say $\bx = x_1,\ldots,x_s$ (with $s \leq r$) is an $N$-superficial sequence if $x_1$ is $N$-superficial, $x_i$ is $N/(x_1,\ldots,x_{i-1})N$ superficial for $2 \leq i \leq s$.
\begin{proposition}\label{estimate}
Let $N \in \CMr(A)$ with $r \geq 1$ and let $\dim A = d$. Let $0 \rt Y \rt X \rt N \rt 0$ be a maximal \CM \ approximation of $M$. Let $\bx = x_1,\ldots,x_{d-r}$ be a $A\oplus X \oplus \Omega(X) \oplus N$ superficial sequence. Set $B = A/(\bx)$. Then
$$ \theta_A(N) \leq e_0(N)\theta_B(k). $$
\end{proposition}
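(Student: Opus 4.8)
The plan is to push everything down to the $r$-dimensional Gorenstein local ring $B = A/(\bx)$ by superficial reduction, and then to estimate $\theta_B$ of a finite length $B$-module in terms of its length and $\theta_B(k)$. First I would check that $0 \rt Y/\bx Y \rt X/\bx X \rt N/\bx N \rt 0$ is a maximal \CM\ approximation of $N/\bx N$ over $B$. Since $N$ is \CM\ of dimension $d-r$ and $X$ is maximal \CM, the sequence $\bx$ is regular on $A$, on $N$ and on $X$: superficial elements over a module of positive depth are regular, and at every intermediate stage the quotients of $N$ (resp. of $X$) remain \CM\ of positive dimension because $r\geq 1$. As $Y \subseteq X$, the sequence $\bx$ is then $Y$-regular as well. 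Hence $-\otimes_A B$ leaves the sequence short exact, $X/\bx X$ is maximal \CM\ over $B$ (the depth drops by $d-r$), and $Y/\bx Y$ has finite projective dimension over $B$ (tensor a finite free $A$-resolution of $Y$ with $B$; it stays acyclic since $\bx$ is $Y$-regular). Therefore $\theta_B(N/\bx N) = e^T_B([X/\bx X])$.

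Next I would iterate Proposition \ref{mod-sup} to get $e^T_A([X]) = e^T_B([X/\bx X])$, whence $\theta_A(N) = \theta_B(N/\bx N)$. At the $i$-th stage one works over $A_i = A/(x_1,\dots,x_i)$ with the module $X/(x_1,\dots,x_i)X \in \CMa(A_i)$, whose $A_i$-syzygy is $\Om_A(X)/(x_1,\dots,x_i)\Om_A(X)$ by the isomorphism used in the proof of \ref{mod-sup}; so the hypothesis that $\bx$ is superficial for $A\oplus X\oplus\Om_A(X)$ guarantees that $x_{i+1}$ is superficial for $A_i \oplus (X/(\cdots)X) \oplus \Om_{A_i}(X/(\cdots)X)$, and $\dim A_i = d-i \geq r+1\geq 2$ for all $i \leq d-r-1$, so Proposition \ref{mod-sup} applies at each stage (when $r=d$ there is nothing to reduce). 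In the same vein, $\bx$ is an $N$-superficial system of parameters, so iterating the identity $e_0(N/xN) = e_0(N)$ from \ref{H-mod-sup} (valid at each step since the successive quotients stay \CM\ of positive dimension) gives $e_0(N) = e_0(N/\bx N) = \ell(N/\bx N)$, the last equality because $N/\bx N$ has finite length.

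It now suffices to prove that $\theta_B(L) \leq \ell(L)\,\theta_B(k)$ for every finite length $B$-module $L$; applied to $L = N/\bx N$ this gives, together with the previous paragraph,
\[
\theta_A(N) = \theta_B(N/\bx N) \leq \ell(N/\bx N)\,\theta_B(k) = e_0(N)\,\theta_B(k).
\]
For the estimate I would induct on $\ell(L)$. If $\ell(L)=1$ then $L\cong k$ and there is nothing to prove. Otherwise, choosing a simple submodule gives a short exact sequence $0 \rt k \rt L \rt L' \rt 0$ with $\ell(L') = \ell(L)-1$; all three modules have finite length, hence are \CM\ of codimension $\dim B = r \geq 1$, so Proposition \ref{exact-n} provides an exact triangle $X_k \rt X_L \rt X_{L'} \rt \Om^{-1}(X_k)$ in $\CMS(B)$. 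Since $e^T_B(-)$ is a triangle function on $\CMS(B)$ by Theorem \ref{existence}, sub-additivity yields $\theta_B(L) \leq \theta_B(k) + \theta_B(L')$, and the induction hypothesis gives $\theta_B(L') \leq \ell(L')\theta_B(k)$, so $\theta_B(L) \leq \ell(L)\theta_B(k)$.

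The delicate part is the bookkeeping of the second paragraph (and part of the first): one must be sure the given superficial sequence can be taken simultaneously regular on $A$, $X$, $Y$ and $N$, that the approximation sequence stays short exact after each reduction, and that the superficiality hypotheses of Proposition \ref{mod-sup} genuinely persist down the whole tower $A \supseteq A_1 \supseteq \cdots \supseteq B$ --- this is exactly where the syzygy isomorphism $\Om_A(X)/x\Om_A(X)\cong \Om_{A/(x)}(X/xX)$ and the constraint $r\geq 1$ are used. Once those reductions are in place, the estimate over $B$ is a short induction on length.
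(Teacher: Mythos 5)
Your proposal is correct and follows essentially the same route as the paper: reduce modulo the superficial sequence to get $\theta_A(N)=\theta_B(N/\bx N)$ with $\ell(N/\bx N)=e_0(N)$, then induct on length using \ref{exact-n} and sub-additivity of $e^T_B$ to get $\theta_B(L)\leq \ell(L)\theta_B(k)$. The only (immaterial) difference is that you peel off a simple submodule via $0\rt k\rt L\rt L'\rt 0$, whereas the paper uses a maximal submodule via $0\rt V\rt L\rt k\rt 0$; your treatment of the reduction bookkeeping is, if anything, more careful than the paper's.
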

\begin{proof}
Note $\bx$ is a $X\oplus Y \oplus N$ regular sequence. So $0 \rt Y/\bx Y \rt X/\bx X \rt N/\bx N \rt 0$ is a maximal \CM \ approximation of the $B$-module $N/\bx N$. Note as $r \geq 1$ we have that $d-r \leq  = d-1$. Using
\ref{mod-sup} we get $e^T_B(X/\bx X) = e^T_A(X)$. So we have $\theta_A(N) = \theta_B(N/\bx N)$. It suffices to prove   $\theta_B(N/\bx N) \leq e(N)\theta_B(k)$. By \ref{H-mod-sup} we get that $N/\bx N$ is a $B$-module of length $e_0(N)$. 

Let $L$ be a finite length $B$-module. We prove by induction on $\ell(L)$ that $\theta_B(L) \leq \ell(L) \theta_B(k)$. We have nothing to prove if $\ell(L) = 1$. So assume $\ell(L)= m \geq 2$ and the result is proved for all $B$-modules of length $\leq m-1$. 

We have an exact sequence $0 \rt V \rt L \rt k \rt 0$ where $\ell(V) = m-1$. In $\CMS(B)$ we have an exact triangle
\[
X_V \rt X_L \rt X_k \rt \Om^{-1}(X_V)
\]
It follows that $e^T_B(X_L) \leq e^T_B(X_V) + e^T_B(X_k) \leq m e^T_B(k)$. Thus $\theta_B(L) \leq m \theta_B(k)$.
\end{proof}
Our proof of Theorem \ref{result-3} uses the following result by Herzog-Kuhl, \cite[Theorem 2.1]{HK}.
\begin{theorem}
Let $R$ be a local Gorenstein domain with infinite residue field $k$. Let $0 \rt F_1 \rt M_1 \rt I_1 \rt 0$ and 
 $0 \rt F_2 \rt M_2 \rt I_2 \rt 0$ be any two Bourbaki sequences (i.e., $F_1,F_2$ are free, $M_1, M_2$ are maximal \CM \ modules and $I_1, I_2$ are \CM \ ideals of codimension $2$). Then the following two statements are equivalent:
 \begin{enumerate}[\rm (1)]
 \item
 $M_1$ and $M_2$ are stably isomorphic.
 \item
 $I_1$ and $I_2$ are evenly linked by a complete intersection.
 \end{enumerate}
\end{theorem}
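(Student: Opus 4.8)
The plan is to push both conditions into the stable category $\CMS(R)$, reduce to a single statement about maximal \CM \ approximations, settle $(2)\Rightarrow(1)$ with the Martsinkovsky--Strooker theorem, and for $(1)\Rightarrow(2)$ reconstruct an even chain of complete-intersection links.

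\emph{Reduction.} In a Bourbaki sequence $0 \rt F \rt M \rt I \rt 0$ the submodule $F$ is free, hence of finite projective dimension, and $M$ is maximal \CM; so the sequence is a maximal \CM \ approximation of the module $I$, and $M \cong X_I$ in $\CMS(R)$. Splicing with $0 \rt I \rt R \rt R/I \rt 0$ yields $0 \rt F \rt M \rt R \rt R/I \rt 0$ with $\ker(M \rt R) = F$; since free modules and $R$ are zero in $\CMS(R)$ (equivalently, in Buchweitz's singularity category \cite{Bu}), this exhibits $X_{R/I}$ as $\Om_R^{-1}(M)$, i.e.\ $M \cong \Om_R(X_{R/I})$ in $\CMS(R)$. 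As $\Om_R$ is an auto-equivalence of $\CMS(R)$, we conclude that $M_1$ and $M_2$ are stably isomorphic \ff \ $X_{R/I_1} \cong X_{R/I_2}$ in $\CMS(R)$. Thus it suffices to prove that $X_{R/I_1} \cong X_{R/I_2}$ in $\CMS(R)$ if and only if $I_1$ and $I_2$ lie in the same even CI-linkage class.

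\emph{Proof that $(2)\Rightarrow(1)$.} A complete intersection is a Gorenstein ideal, and by \cite[Proposition 1]{MS} a link of ideals $I \sim_\q I'$ is the same as a link $R/I \sim_\q R/I'$ of the cyclic modules; hence an even CI-linkage chain of ideals is an even linkage chain of the modules $R/I_\bullet$, and the Martsinkovsky--Strooker theorem recalled above gives $X_{R/I_1} \cong X_{R/I_2}$ in $\CMS(R)$. By the reduction, $M_1 \cong M_2$.

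\emph{Proof that $(1)\Rightarrow(2)$, and the main obstacle.} Choose $X := X_{R/I_1} \cong X_{R/I_2}$ with no free summand. By the reduction $X$ is, up to the shift $\Om_R$, the common stable Bourbaki module $M$ of $I_1$ and $I_2$; adding trivial summands, we may assume $I_1$ and $I_2$ both sit in Bourbaki sequences $0 \rt F_j \rt M \rt I_j \rt 0$ with the \emph{same} maximal \CM \ module $M$. It now suffices to show: \emph{the codimension $2$ CM ideals $I$ admitting a Bourbaki sequence $0 \rt F \rt M \rt I \rt 0$ with a fixed maximal \CM \ module $M$ all lie in one even CI-linkage class.} I would prove this in two steps. (i) \emph{Realization:} using that $k$ is infinite, a basic-element/general-position argument yields a ``standard'' such ideal $I_0$ --- take a sufficiently general free submodule $F \subseteq M$ of rank $\rank M - 1$ whose quotient $M/F$ is (isomorphic to) a codimension $2$ \CM \ ideal. (ii) \emph{Connectedness:} given any Bourbaki ideal $I$ of $M$, pick a codimension $2$ complete intersection $\q \subseteq I$ (prime avoidance, $k$ infinite), link to $J = (\q:I)$, and link again; using the canonical-module swap $\omega_{R/I} \cong J/\q$, $\omega_{R/J} \cong I/\q$ of linkage together with the description $X_{R/I} \cong (\Om_R^2\omega_{R/I})^{*}$ from \ref{codim-n}, track how the Bourbaki module of the linked ideals transforms and show that the two linking complete intersections can be chosen so that the result is $I_0$, giving an even two-step chain from $I$ to $I_0$; composing for $I_1$ and $I_2$ then finishes the proof. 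The hard part will be step (ii) --- the assertion that any two Bourbaki ideals of a fixed maximal \CM \ module are evenly CI-linked, which is the local counterpart of Rao's description of even linkage classes of codimension $2$ \CM \ subschemes by a reflexive module up to stable equivalence. It needs the genericity making the family of Bourbaki ideals of $M$ irreducible --- precisely where the hypothesis that $k$ is infinite is used --- and careful parity bookkeeping so the linking chains have even length. A subsidiary point is verifying that the single-link operation on $[X_{R/I}] \in \CMS(R)$ of step (ii) is well defined, which should follow from the change-of-rings syzygy formulas over $\overline{R} = R/\q$ together with \ref{codim-n}.
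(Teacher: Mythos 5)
This statement is not proved in the paper at all: it is quoted verbatim from Herzog--K\"uhl \cite[Theorem 2.1]{HK} and used as an ingredient in the proof of Theorem \ref{result-3}, so there is no internal proof to compare your argument with; it has to be judged on its own merits. Your reduction is correct and consistent with how the paper uses the result: a Bourbaki sequence is an MCM approximation of $I$, so $M \cong X_I$ in $\CMS(R)$, and splicing with $0 \rt I \rt R \rt R/I \rt 0$ gives $X_I \cong \Om(X_{R/I})$ (the paper itself remarks that $\Om(X_r)$ plus a free summand is an MCM approximation of $I_r$); since $\Om$ is an auto-equivalence of $\CMS(R)$, stable isomorphism of $M_1,M_2$ is equivalent to $X_{R/I_1}\cong X_{R/I_2}$ in $\CMS(R)$. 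Your $(2)\Rightarrow(1)$ is then a correct application of \cite[Proposition 1]{MS} and the Martsinkovsky--Strooker theorem (a complete intersection ideal is perfect with Gorenstein quotient, so CI links are Gorenstein links).

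The genuine gap is $(1)\Rightarrow(2)$, which is the substantive half of the theorem. After your reduction, what remains to show is exactly that all codimension $2$ \CM \ ideals admitting a Bourbaki sequence with a fixed maximal \CM \ module $M$ lie in a single even CI-linkage class --- but this \emph{is} the Herzog--K\"uhl theorem, so restating it as steps (i) and (ii) is not progress unless those steps are carried out, and they are not: step (ii) is a plan, not an argument. In particular, (a) irreducibility of a family of Bourbaki ideals does not by itself imply they are all evenly linked (being in a fixed even linkage class is not a condition one can spread along an irreducible family without further work); (b) the key assertion that for an arbitrary Bourbaki ideal $I$ of $M$ one can choose two successive complete intersection links landing exactly on the ``standard'' ideal $I_0$ is precisely where all the difficulty lies --- one must control how the Bourbaki module transforms under a single CI link (a single link replaces $X_{R/I}$ by a dual/syzygy-type transform, so the parity bookkeeping you mention is itself a nontrivial computation) and must show the generic choices can be matched up; none of this is verified. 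Note also that the Martsinkovsky--Strooker theorem only gives the implication from linkage to stable isomorphism, so it cannot be used to close this direction. As it stands the proposal proves only the easy implication $(2)\Rightarrow(1)$, and for $(1)\Rightarrow(2)$ one still has to reproduce the actual Herzog--K\"uhl analysis of (generic) Bourbaki sequences.
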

We should remark that a Bourbaki sequence is simply a maximal \CM \ approximation of $I$ where $I$ is a codimension $2$ \CM \ ideal.
\s Our proof of Theorem \ref{result-3} also uses the fact that if $A$ is a simple hypersurface singularity then it has only finitely many indecomposable maximal \CM \ modules; \cite{K}. We now give
\begin{proof}[Proof of Theorem \ref{result-3}]
Suppose if possible for some $m \geq 1$ the set $\mathcal{C}_m $ is not contained in any collection of finitely many even liason classes. For $j \geq 1$ let $I_j$ be ideals with $e_0(A/I_j) \leq m$ such that the liason classes $L_j$ of $I_j$ are all distinct. 

For $j \geq 1$ let $0 \rt Y_j \rt X_j \rt A/I_j \rt 0$ be maximal \CM \ approximation of $A/I_j$.

\noindent \emph{Claim:}
 there exists $\bx = x_1,\ldots,x_{d-2}$ such that $\bx$ is a $A \oplus X_j \oplus \Om(X_j) \oplus A/I_j$-superficial for all $j \geq 1$.
 
 To prove the claim let us recall a construction of a superficial element. Let $E$ be an $A$-module. Consider the associated graded ring $G = \bigoplus_{n \geq 0} \m^n/\m^{n+1}$ of $A$ and let $G(E) = \bigoplus_{n\geq 0} \m^n E/\m^{n+1}E$ be the associated graded module of $E$. Let $^*\Ass(G(E))$ be the relevant associated primes of $G(E)$, i.e., those associated primes of $G(E)$ which are not equal to $G_+ =  \bigoplus_{n \geq 1} \m^n/\m^{n+1}$. Let $V = \m/\m^2$. Note $P\cap V$ is a proper subspace of $V$ for every $P \in \ ^*\Ass(G(E))$. As the field $k$ is infinite and as $^*\Ass(G(E))$ is a finite set we get that
 \[
\widetilde{V} = \left( V \setminus \bigcup_{P \in ^*\Ass(G(E))} P \cap V \right) \neq \emptyset.
 \]
An element $x \in \m$ such that $\ov{x} \in \widetilde{V}$ is an $E$-superficial element. 
This construction yields superficial element of a single module. Note we have only used that $k$ is an infinite field. 

To prove the claim, let $E_j = A \oplus X_j \oplus \Om(X_j) \oplus A/I_j$ for $j \geq 1$. Note $^*\Ass(G(E_j))$
is a finite set. So 
\[
D = \bigcup_{i\geq 1}\left(\bigcup_{P \in ^*\Ass(G(E_i))} P \cap V\right)
\]
is a union of a countable number of \textit{proper} $k$-subspaces of $V$. As $k$ is an uncountable field we get that 
$\widetilde{V} =  V \setminus D$ is non-empty. An element $x \in \m$ such that $\ov{x} \in \widetilde{V}$ is an $E_j$-superficial element for all $j \geq 1$. Note for construction of this superficial element we did not use that $A$ is a simple hypersurface singularity. We only used that the residue field of $A$ is uncountable. Iterating this procedure we get a superficial sequence $\bx = x_1,\ldots,x_{d-2}$ for all $E_j$. 

Set $B = A/(\bx)$. By the claim and Proposition \ref{estimate} we get that 
$$ e^T_A(X_j) = \theta_A(A/I_j) \leq m \theta_B(k).$$
  Set $ c = m \theta_B(k).$ Let $M_1,M_2, \ldots,M_m$ be all the indecomposable non-free maximal \CM \ $A$-modules.
  Write
  $$ X_j = M_1^{a_{1,j}} \oplus \cdots\oplus M_m^{a_{m,j}}\oplus A^{l_j}.$$
  Here $a_{i,j}, l_j \geq 0$. Note that
  \[
  \sum_{j = 1}^{m}a_{i,j} \leq  e^T_A(X_j)  \leq c \quad \text{for all} \ j \geq 1.
  \]
  By pigeon-hole principle it follows that there exists $r,s$ with $r < s$ such that $X_r$ is stably isomorphic to $X_s$. Note $\Om(X_r)$ (and a free summand) will give a maximal \CM \ approximation of $I_r$. By Herzog-Kuhl's result we have that $I_r$ is evenly linked to $I_s$. So $L_r = L_s$ a contradiction. Thus $\mathcal{C}_m$ is contained in finitely many even liason classes of $A$.
\end{proof}

\end{document}